\documentclass[10pt,a4paper,twoside]{article}
%
%
\usepackage{etex}
\usepackage{amsmath}
\usepackage{amssymb}
\usepackage{amsthm} 
\usepackage{easybmat} 
\usepackage{enumerate}
\usepackage{dsfont} 
\usepackage{epsfig} 
\usepackage{makeidx}
\usepackage[arrow, matrix, curve]{xy}
\usepackage[refpage]{nomencl} 
\usepackage{Diss}
\usepackage[OT2,T1]{fontenc}
\DeclareSymbolFont{cyrletters}{OT2}{wncyr}{m}{n}
\DeclareMathSymbol{\Sha}{\mathalpha}{cyrletters}{"58}
%
\parindent0mm

%
%
\begin{document}
%
%
\hyphenation{De-de-kind-ring Ga-lois-er-wei-te-rung
Trans-zen-denz-grad trans-zen-den-tes Kor-res-pon-denz Rie-mann-schen Na-mens-pa-ten Mess-lauf}
%
%
\begin{center}{\bf \LARGE The Cohen-Lenstra Heuristic:\bigskip\\ Methodology and Results\\}\vspace{2cm}
%
%
%
{\large Johannes Lengler}\vspace{1cm}\\
%
\end{center}

\thispagestyle{empty}

\begin{abstract}
In number theory, great efforts have been undertaken to study the Cohen-Lenstra probability measure on the set of all finite abelian $p$-groups. On the other hand, group theorists have studied a probability measure on the set of all partitions induced by the probability that a randomly chosen $n\times n$-matrix over $\FF_p$ is contained in a conjucagy class associated with this partitions, for $n \to \infty$.

This paper shows that both probability measures are identical. As a consequence, a multitide of results can be transferred from each theory to the other one. The paper contains a survey about the known methods to study the probability measure and about the results that have been obtained so far, from both communities.
\end{abstract}

\section{Introduction}

In 1984, Henri Cohen and Hendrik W. Lenstra published a celebrated paper \cite{CL83}, in which they conjectured that the sequence of class groups of quadratic number fields behaves essentially like a random sequence with respect to a certain probability distribution on the space of all finite abelian groups.

This probability distribution is based on the heuristic that probability for a group to occur should be inverse porportional to the number of its automorphisms. In honour of their paper, I will call this the ``Cohen-Lenstra distribution'' or ``Cohen-Lenstra probability measure''.

The consequences are immense, compared to what can be proven. So far, it is not even proven that there are infinitely many number fields with trivial class group -- a conjecture of Gau\ss\ of 1801 \cite{Gau01}. On the other hand, the Cohen Lenstra conjectures imply that for real quadratic number fields, a \emph{majority} of all these fields have trivial class group (if we neglect the $2$-part of the class group, see \cite[sect.\! 6.1]{Len09} for details).

Later on, it turned out that the Cohen-Lenstra measure occurs also in many other contexts and plays the role of a ``natural'' distribution, regulating the structure of finite abelian groups in all situations where no obvious structural obstacles for a random-like behaviour exist. The sequence of class groups of number fields is the most famous application of the Cohen-Lenstra heuristic --- not only for quadratic extensions of $\QQ$, but also much more general number field extensions are seemingly governed by similar heuristics, which may be derived from the Cohen-Lenstra heuristic. Note that apart from some special cases, all statements are conjectural but are supported by strong numerical and theoretical evidence. You may consult \cite{Mal06} or \cite{Len09} for details. Furthermore, there are completely different applications such as generating a finite abelian $p$-group ($p$ a prime) by choosing generators and imposing random relations on them with respect to some canonical Haar measure (due to Friedman and Washington in \cite{FW89}, see also \cite{Len09}).\bigskip

Due to the variety of applications and the vast consequences of the Cohen-Lenstra conjectures, number theorists have undertaken considerable efforts in order to study the Cohen-Lenstra measure in dozens of papers (\cite{CL83}, \cite{CM87}, \cite{CM90}, \cite{Len08}, \cite{Len09} and numerous others).

However, unnoticed by the number theory community, there has been another community of group theorists who encountered the Cohen-Lenstra distribution in a completely different context, namely while studying conjugacy classes of matrices. Although this theory is fully developed (e.g., cf. \cite{Ger61}, \cite{Kun81}, \cite{RS88}, \cite{Sto93}, \cite{Ful97}, \cite{Ful99}, \cite{Ful00}), the connection to the Cohen-Lenstra heuristic has slipped general attention in both direction: Neither were the group theorists aware of the Cohen-Lenstra heuristic \cite{Fulmail}, nor did the number theorists recognize the full connection to conjugacy classes (although Washington was aware of corollary \ref{cor:fixedspace} about fixed spaces \cite{Was86}, which is a special case of the general relationship).

Both communities computed important parameters and invented methods to investigate the measure. Some results were doubly obtained, but both groups may also learn new concepts from each other. The most important methods, beside direct calculations, are Cohen and Lenstra's $\zeta$-function approach, the Kung-Stong cycle index, and Fulman's two interpretations of the Cohen-Lenstra probabilities via Markov chains and via Young tableaux.\bigskip

The purpose of this paper is to give an overview of the state of the art obtained by both communities. I want to emphasize that all the results in this chapter are not my own work. My humble contribution is only to re-interpret established results in the notion of the Cohen-Lenstra heuristic.\bigskip

In this paper, I will only consider the local Cohen-Lenstra heuristic, i.e., I will only consider finite abelian $p$-groups for a fixed prime $p$. A generalization to non-primary groups is possible but requires much care. See \cite[chapter 5]{Len09} for a thorough treatment.\bigskip

The paper is structured as follows: First I give a short introduction to the Cohen-Lenstra heuristic and provide the reader with enough information to do direct calculations. Then I give a brief overview of the methods invented by several researchers. Since we unify two completely worked-out theories, space limitations will not allow us to work out all details, so I refer to the original papers for a more complete treatment. Finally, I give a collection of important quantities related to the Cohen-Lenstra measure that have been computed by those methods.

\section{Preliminaries and notation}\label{sect:prelim}

For this paper, let $p$ be a fixed prime number. We put $q:= p^{-1}$.

Throughout the paper, I will only consider finite abelian $p$-groups. For brevity, we will write ``group'' to mean ``finite abelian $p$-group'' i.e., a finite abelian group with order a power of $p$. Furthermore, we will consider groups only up to ismomorphism, so a phrase like ``sum over all groups'' really means that the sum runs over all isomorphism classes of finite abelian $p$-groups.

$\GG_p$ is the set of all (isomorphism classes of) finite abelian $p$-groups. 

For a finite set $M$, we will denote its cardinality by $\#M$.\medskip

For a finite abelian group $G$, we write $\Aut(G)$ for its automorphism group. The \emph{order $\ord(G)$} is the number of elements of $G$, the \emph{rank $\rk(G)$} is the minimal number of generators. The \emph{exponent $\exp(G)$} is the minimal integer $n>0$ such that $n\cdot G = \{0\}$. The \emph{$p$-adic order} and \emph{$p$-adic exponent} are given by the formulas

\begin{eqnarray*}
\ord_p(G) & := &  \log_p(\ord(G)),\\
\exp_p(G) & := &  \log_p(\exp(G)),
\end{eqnarray*}

respectively.\medskip

$\PPart$ is the set of all integer partitions. A partition of an integer $n\geq 0$ is a way to write $n$ as a sum of positive integers up to order of summation, e.g., 

\begin{eqnarray*}
6 & = & 6\\
&=&5+1\\
&=&4+2\\
&=&4+1+1\\
&\vdots&
\end{eqnarray*}

A partition may be uniquely described by a tuple $\underline{n} = (n_i)_{i=1,\ldots,r}$, where $r\in \NN_0$, $n_1\geq n_2 \geq \ldots \geq n_k>0$. In this representation, the $n_i$ are the different summands occurring, so $\underline{n}$ is a partition of $n = \sum_i n_i$.\smallskip

We may visualize a partition by its \emph{Young diagram}. E.g., the Young diagram of $\underline{n} = (4,2,1)$ is

\vbox{\nboxes{4}\nmarkedboxes{0}{$\times$}}\nointerlineskip
\vbox{\nboxes{2}\nmarkedboxes{0}{$\times$}}\nointerlineskip
\vbox{\nboxes{1}\nmarkedboxes{0}{$\times$}}\nointerlineskip
\medskip

By mirroring the Young diagram of $\underline{n}$ along the main diagonal, we obtain the \emph{conjugate partition} $\underline{n}'$ of $\underline{n}$. In the above example, $\underline{n}' = (3,2,1,1)$.

\bigskip

By the Elementary Divisor Theorem, a finite abelian $p$-group can be uniquely (up to isomorphism) written in the form

$$\prod_{i=1}^k (\ZZ/p^{e_i})^{r_i},$$
where $k\in \NN_0$, $e_i,r_i \in \NN^+$ for all $i$, and where $e_1>e_2>\ldots > e_k$.

Hence, we have a canonical bijection $\GG_p \stackrel{\cong}{\rightarrow} \PPart$, and from now on we will identify both sets.

\subsection{The Cohen-Lenstra measure}

Recall that the Cohen-Lenstra measure assigns to each group a measure which is inversely proportional to the number of its automorphisms. Although we do not directly make use of it, let me give a formula for this number:

\begin{theorem}\label{thm:sizeofautomorphisms}
Let $G=\prod_{i=1}^k (\ZZ/p^{e_i})^{r_i}$ be a finite abelian $p$-group with $k\geq 0$, $e_1>\ldots>e_k>0$, $r_i >0$. The size of the automorphism group of $G$ is

$$\#\Aut(G) = \left(\prod_{i=1}^k\left(\prod_{s=1}^{r_i} (1-p^{-s}) \right)\right)\left(\prod_{1\leq i,j\leq k} p^{\min(e_i,e_j)r_i r_j}\right).$$
\end{theorem}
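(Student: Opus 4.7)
The plan is to decompose $G = \bigoplus_{i=1}^k G_i$ with $G_i := (\ZZ/p^{e_i})^{r_i}$ and view endomorphisms of $G$ as block matrices $\phi = (\phi_{ij})_{1\le i,j\le k}$ with $\phi_{ij}\in\operatorname{Hom}(G_j,G_i)$. A routine calculation gives $|\operatorname{Hom}(G_j,G_i)| = p^{\min(e_i,e_j)\,r_i r_j}$, since such a homomorphism is nothing but an $r_i\times r_j$ matrix with entries in $\ZZ/p^{\min(e_i,e_j)}$. Multiplying over all pairs yields $|\operatorname{End}(G)| = \prod_{i,j}p^{\min(e_i,e_j)\,r_i r_j}$, which already supplies the $p$-power factor of the theorem; the remaining task is to compute the proportion of endomorphisms that are in fact automorphisms.

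For this I would pass to the reduction map $\pi:\operatorname{End}(G)\to\operatorname{End}(G/pG) = M_R(\FF_p)$, where $R:=\sum_i r_i$. By Nakayama's lemma (in the form: surjectivity modulo $p$ implies surjectivity, and a surjective endomorphism of a finite set is bijective), $\phi\in\Aut(G)$ if and only if $\pi(\phi)\in\operatorname{GL}_R(\FF_p)$, so $|\Aut(G)| = |\ker\pi|\cdot|\pi(\Aut(G))|$. The crucial observation is that $\pi$ is not surjective: if $i<j$ then $e_i>e_j$, so the image of $\phi_{ij}:G_j\to G_i$ is killed by $p^{e_j}$ and hence lies in $p^{e_i-e_j}G_i\subseteq pG_i$, forcing $\pi(\phi)_{ij}=0$; for $i\ge j$, on the other hand, $e_i\le e_j$, so arbitrary images in $G_i$ for a generating set of $G_j$ define a valid homomorphism, and any $\FF_p$-linear map between the reductions lifts. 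Thus $\operatorname{Im}\pi$ is exactly the set of block lower-triangular matrices (with block sizes $r_1,\ldots,r_k$), of cardinality $\prod_{i\ge j}p^{r_i r_j}$, and such a matrix is invertible iff each diagonal block lies in $\operatorname{GL}_{r_i}(\FF_p)$.

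Putting the pieces together, $|\ker\pi| = |\operatorname{End}(G)|/\prod_{i\ge j}p^{r_i r_j}$ and $|\pi(\Aut(G))| = \prod_i|\operatorname{GL}_{r_i}(\FF_p)|\cdot\prod_{i>j}p^{r_i r_j}$. Substituting $|\operatorname{GL}_{r_i}(\FF_p)| = p^{r_i^2}\prod_{s=1}^{r_i}(1-p^{-s})$ and using the identity $\sum_i r_i^2+\sum_{i>j}r_i r_j = \sum_{i\ge j}r_i r_j$, the surplus powers of $p$ cancel perfectly and the formula emerges in exactly the claimed shape. The main obstacle is the careful verification of the block lower-triangular structure of $\operatorname{Im}\pi$ --- both the vanishing above the diagonal and the surjectivity on and below it --- together with the straightforward but error-prone exponent bookkeeping in the final simplification; beyond that, everything reduces to the standard order formula for $\operatorname{GL}_n(\FF_p)$ and elementary linear algebra over $\FF_p$.
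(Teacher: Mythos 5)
Your argument is correct and complete. The paper itself gives no proof of this theorem --- it only cites \cite[theorem 1.2.10]{Len09} --- so there is nothing to compare line by line; what you have written is the standard self-contained derivation (essentially the classical Ranum/Hillar--Rhea computation). All the key steps check out: $|\operatorname{Hom}(G_j,G_i)|=p^{\min(e_i,e_j)r_ir_j}$ because a homomorphism $(\ZZ/p^{e_j})^{r_j}\to(\ZZ/p^{e_i})^{r_i}$ is an $r_i\times r_j$ matrix over $\ZZ/p^{\min(e_i,e_j)}$; the Nakayama criterion correctly reduces invertibility of $\phi$ to invertibility of $\pi(\phi)$ on $G/pG$ (and membership in $\Aut(G)$ depends only on $\pi(\phi)$, so $\Aut(G)$ is a union of full $\ker\pi$-cosets, justifying $|\Aut(G)|=|\ker\pi|\cdot|\operatorname{Im}\pi\cap\GL_R(\FF_p)|$); the image of $\pi$ is exactly the block lower-triangular matrices, since for $e_i>e_j$ the image of $G_j$ in $G_i$ lies in $p^{e_i-e_j}G_i\subseteq pG_i$ while for $e_i\leq e_j$ the generators of $G_j$ may be sent anywhere in $G_i$; and the exponent bookkeeping closes via $\sum_i r_i^2+\sum_{i>j}r_ir_j=\sum_{i\geq j}r_ir_j$, leaving precisely $\bigl(\prod_i\prod_{s=1}^{r_i}(1-p^{-s})\bigr)\prod_{i,j}p^{\min(e_i,e_j)r_ir_j}$. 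I verified the formula and your intermediate counts on $\ZZ/p^2\times\ZZ/p$ as a sanity check; everything is consistent. The only stylistic caveat is that you should state explicitly that the $k$ blocks $\phi_{ij}$ can be chosen independently (which holds because $G$ is the direct sum of the $G_i$), but this is immediate.
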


\begin{proof}
\cite[theorem 1.2.10]{Len09}
\end{proof}

Now we turn to the definition of the Cohen-Lenstra weight and the Cohen-Lenstra measure:

\begin{definition}
The \emph{Cohen-Lenstra weight} $w$ is the measure on the set $\GG_p$ of all finite abelian $p$-groups that is defined via 
$$w(\{G\}) := \frac{1}{\#\Aut(G)}\qquad \text{for all one-element sets $\{G\}\subset \GG_p$}.$$

The \emph{Cohen-Lenstra (probability) measure} $P$ is the probability measure on $\GG_p$ that is obtained by scaling $w$:
$$P(M) := \frac{w(M)}{w(\GG_p)}\qquad\text{for $M\subseteq \GG_p$}.$$

In slight abuse of notation we will write $w(G)$ and $P(G)$ instead of $w(\{G\})$ and $P(\{G\})$, respectively, when we measure one-element sets $\{G\}\subset \GG_p$. 

\end{definition}

The above definition of the Cohen-Lenstra measure makes only sense if $w(\GG_p)$ is finite. Fortunately, this is the case. Hall \cite{Hal38} has shown that 

\begin{theorem}\label{thm:weightsum}
The Cohen-Lenstra weight of the set of all finite abelian $p$-groups is

$$w(\GG_p) = \prod_{i=1}^{\infty} (1-p^{-i})^{-1} < \infty.$$
\end{theorem}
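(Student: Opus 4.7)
The plan is to reduce the sum $w(\GG_p) = \sum_G 1/\#\Aut(G)$ to a classical $q$-series identity. Using the Elementary Divisor bijection $\GG_p \leftrightarrow \PPart$ and Theorem~\ref{thm:sizeofautomorphisms}, the automorphism count of the group $G_\lambda$ corresponding to a partition $\lambda$ may first be rewritten in the form
\[
\#\Aut(G_\lambda) \;=\; p^{\sum_{s \geq 1}(\lambda'_s)^2}\,\prod_{i \geq 1}(q;q)_{m_i(\lambda)},
\]
where $\lambda'$ is the conjugate partition, $m_i(\lambda)$ is the multiplicity of $i$ as a part of $\lambda$, and $(q;q)_n := \prod_{k=1}^n(1-q^k)$ is the standard $q$-Pochhammer symbol. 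The key simplification here is the elementary identity $\min(e_i,e_j) = \#\{s \geq 1 : s \leq e_i \text{ and } s \leq e_j\}$, which converts the quadratic exponent $\sum_{i,j}\min(e_i,e_j)r_ir_j$ appearing in Theorem~\ref{thm:sizeofautomorphisms} into $\sum_s (\lambda'_s)^2$. With this rewriting the claim reduces to the $q$-series identity
\[
\sum_{\lambda \in \PPart}\frac{q^{\sum_{s \geq 1}(\lambda'_s)^2}}{\prod_{i \geq 1}(q;q)_{m_i(\lambda)}} \;=\; \prod_{i \geq 1}(1-q^i)^{-1}.
\]

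Next I would verify absolute convergence of the left-hand side (via the coarse lower bound $\#\Aut(G_\lambda) \geq p^{|\lambda|}\prod_{s \geq 1}(1-q^s)^{k(\lambda)}$, where $k(\lambda)$ is the number of distinct parts of $\lambda$, together with $k(\lambda) = O(\sqrt{|\lambda|})$ and the Hardy--Ramanujan bound on the ordinary partition function, noting that $q = p^{-1} \in (0,1)$), and then invoke the classical identity of P.~Hall~\cite{Hal38}, which asserts precisely the displayed equality. A self-contained proof can be given by parametrizing the sum via $\mu = \lambda'$ --- a weakly decreasing, eventually vanishing sequence of non-negative integers with $m_i(\lambda) = \mu_i - \mu_{i+1}$ --- and proceeding by induction on the largest part $\mu_1$: fixing $\mu_1$ and collecting all compatible $\mu$'s isolates a single $q$-Pochhammer factor $(1-q^{\mu_1})^{-1}$ per inductive step, and telescoping over $\mu_1 \geq 1$ produces the infinite product on the right. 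Finiteness of $w(\GG_p)$ is then immediate from the product, since each factor $(1-q^i)^{-1} = 1 + q^i + q^{2i} + \ldots$ is finite and the product converges absolutely because $\sum_i q^i < \infty$.

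The main obstacle is the combinatorial identity itself: at first glance the quadratic weighting $q^{\sum_s(\lambda'_s)^2}$ in the numerator bears no obvious relationship to Euler's linear partition generating function $\prod_i(1-q^i)^{-1} = \sum_n p(n)q^n$, and the precise cancellation between those quadratic weights and the denominators $(q;q)_{m_i(\lambda)}$ is non-trivial. This reconciliation is the substance of Hall's original argument (and anticipates what later became a principal specialization of Hall--Littlewood symmetric functions); everything else in the plan amounts to bookkeeping translations via the Elementary Divisor Theorem and standard convergence estimates.
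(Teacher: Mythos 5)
Your main route---rewriting $\#\Aut(G_\lambda)$ via the conjugate partition and reducing the claim to the $q$-series identity $\sum_{\lambda}q^{\sum_s(\lambda'_s)^2}\big/\prod_i(q;q)_{m_i(\lambda)}=\prod_i(1-q^i)^{-1}$, which you then attribute to Hall---is correct, and in substance it coincides with the paper, which gives no argument at all beyond the citation to \cite{Hal38}. The translation of Theorem~\ref{thm:sizeofautomorphisms} into conjugate-partition form (using $\min(e_i,e_j)=\sum_{s}[s\le e_i][s\le e_j]$ to get $\sum_{i,j}\min(e_i,e_j)r_ir_j=\sum_s(\lambda'_s)^2$) is right, and the convergence remarks are fine if somewhat more elaborate than needed.

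However, the ``self-contained proof'' you sketch does not work as described. Writing $\mu=\lambda'$, your induction on $\mu_1$ with one factor $(1-q^{\mu_1})^{-1}$ ``isolated per step'' would amount to the claim that the partial sum over partitions with $\mu_1\le r$ equals $\prod_{i=1}^{r}(1-q^i)^{-1}$. This already fails at $r=1$: the partitions with $\mu_1\le 1$ correspond to the cyclic groups $\ZZ/p^e$ (and the trivial group), whose total weight is
\[
1+\sum_{e\ge 1}\frac{q^e}{1-q}\;=\;1+\frac{q}{(1-q)^2}\;=\;1+q+2q^2+3q^3+\cdots,
\]
which is not $\tfrac{1}{1-q}=1+q+q^2+\cdots$. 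Note that $\mu_1=\lambda'_1$ is the \emph{rank} of $G_\lambda$, and the rank strata contribute additively, each with total weight $q^{r^2}/\bigl((q;q)_r\bigr)^2$ (the rank formula quoted later in the paper); the identity one actually needs is the Durfee-square identity $\sum_{r\ge0}q^{r^2}/\bigl((q;q)_r\bigr)^2=(q;q)_\infty^{-1}$, a sum of terms rather than a telescoping product, and each term requires its own (nontrivial) evaluation. If you want a genuinely self-contained argument, either prove the rank formula and invoke the Durfee-square identity, or use the Cohen--Lenstra route: $w_k(G)=w(G)\prod_{i=k-r+1}^{k}(1-q^i)$ increases to $w(G)$ as $k\to\infty$, and $\sum_G w_k(G)=\zeta_k^{(p)}(0)=\prod_{i=1}^{k}(1-p^{-i})^{-1}$, so monotone convergence gives the claim. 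As it stands, you should either repair the sketch or drop it and rest, as the paper does, on the citation to Hall.
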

 
\newpage

\section{Elementary calculations}

The explicit formulas (theorems \ref{thm:sizeofautomorphisms} and \ref{thm:weightsum}) enable us to compute some values rather easily. For example, given a group $G\in\GG_p$, we are given an explicit formula for $P(G)$. As a special case, let me give the probability that a $p$-group is the trivial group $0$. Since $w(0) = 1$, we obtain
 
$$P(0) = \prod_{i=1}^{\infty} (1-p^{-i}).$$

Using $q$-series identities, we may compute some other probabilities. For example, the probability that a random group is cyclic (i.e., has rank $\leq 1$), is (with $q=\frac{1}{p}$, as usual)

\begin{eqnarray*}
P(G \text{ cyclic}) & = & \frac{1}{w(\GG_p)}\sum_{G\text{ cyclic}} w(G)\\
& = & \left(\prod_{i=1}^{\infty} (1-q^i)\right)\sum_{e=0}^{\infty}\frac{q^e}{1-q}\\
& = & \left(\prod_{i=1}^{\infty} (1-q^i)\right)\frac{1}{(1-q)^2}\\
& = & \frac{1}{1-q}\prod_{i=2}^{\infty} (1-q^i)\\
& = & \frac{p}{p-1}\prod_{i=2}^{\infty} (1-p^{-i}).
\end{eqnarray*}

The calculation was pleasantly simple. Now let us compare this to what happens if we try to treat the slightly more complicated question of how likely it is for a random group to have rank $2$. Within the computation we distinguish two different cases, corresponding to the possible group structures $G=(\ZZ/p^e)^2$, and $G=\ZZ/p^{e_1} \times \ZZ/p^{e_2}$, $e_1 > e_2$:

\begin{eqnarray*}
P(\rk(G) = 2) \!\! & = & \!\!\frac{1}{w(\GG_p)}\sum_{\rk(G) = 2} w(G)\\
& = & \!\!\left(\prod_{i=1}^{\infty} (1-q^i)\!\right)\!\!\left(\sum_{e=1}^{\infty}\frac{q^{4e}}{(1-q)(1-q^2)} +\!\! \sum_{e_2=1}^{\infty}\sum_{e_1=e_2+1}^{\infty}\frac{q^{e_1+3e_2}}{(1-q)^2}\right)\\
& = & \!\!\left(\prod_{i=1}^{\infty} (1-q^i)\!\right)\!\!\left(\frac{q^{4}}{(1-q)(1-q^2)(1-q^4)} +\right.\\
&&\phantom{\left(\prod_{i=1}^{\infty} (1-q^i)\right)}\left.+ \frac{1}{(1-q)^2}\sum_{e_2=1}^{\infty}q^{3e_2}q^{e_2+1}\frac{1}{(1-q)}\right)\\
& = & \!\!\left(\prod_{i=1}^{\infty} (1-q^i)\!\right)\!\!\left(\!\frac{q^{4}}{(1-q)(1-q^2)(1-q^4)}\! +\! \frac{q^5}{(1-q)^3(1-q^4)}\right)\\
& = & \!\!\left(\prod_{i=1}^{\infty} (1-q^i)\!\right) \!\frac{q^4-q^5+q^5+q^6}{(1-q)^2(1-q^2)(1-q^4)}\\
& = & \!\!\left(\prod_{i=1}^{\infty} (1-q^i)\!\right) \!\frac{q^4}{(1-q)^2(1-q^2)^2}.
\end{eqnarray*}

Recalling that this was still one of the ``easier'' cases, we see that this approach soon becomes quite cumbersome. It \emph{is} possible to get general results about order and rank of a random group in this way (Bernd Mehnert will present some of these calculations in his PhD-thesis \cite{Meh}), but this requires a highly skillful handling of $q$-series identities, which we do not want to expect from the user. 

So we need other tools to enhance our ability to compute interesting values. The next sections will provide such tools.

\section{Zeta functions}

Cohen and Lenstra embed what I call the Cohen-Lenstra weight $w$ into a larger family of measures $w_k$ as follows. For a finite abelian $p$-group $G$, let $s_k(G)$ be the number of surjective homomorphisms $\ZZ^k\to G$ (or, equivalently, $\ZZ_p^k \to G$). Then they define 

$$w_k(G) := \frac{s_k(G)}{|G|^k}w(G).$$

Note that the denominator equals the number of \emph{all} (not necessarily surjective) homomorphisms $\ZZ^k\to G$.

Then we may compute $w_k(G)$ as
 
\begin{equation}\label{eq:twistedprob}w_k(G) = \begin{cases} \displaystyle w(G)\prod_{i=k-r+1}^k(1-q^i) & \text{ if } k\geq r:=\rk(G),\vspace{1ex} \\ \displaystyle \ 0 & \text{ otherwise.}\end{cases}\end{equation}

(\cite[Prop.\! 3.1]{CL83}).

In particular, we may recover $w(G)$ as

$$w(G) = \lim_{k\to\infty} w_k(G).$$

Now we define the $k$-$\zeta$-function over $\GG_p$ as

$$\zeta_k^{(p)}(s) := \sum_{G\in\GG_p}\frac{w_k(G)}{|G|^s}.$$

Then $\zeta_k^{(p)}$ converges for $\Re(s) > -1$ and may be computed explicitly by

$$\zeta_k^{(p)}(s) = \prod_{i=1}^{k}\frac{1}{(1-p^{-s-i})}$$

(\cite[Cor.\! 3.7]{CL83}).

In particular, this implies the formula $\zeta_{k_1+k_2}^{(p)}(s) = \zeta_{k_1}^{(p)}(s+k_2)\zeta_{k_2}^{(p)}(s)$.

We need one last definition: Let $f: \GG_p \to \CC$ be an integrable function. We define

$$\zeta_k^{(p)}(f;s) := \sum_{G\in\GG_p}\frac{w_k(G)f(G)}{|G|^s}.$$

Then the expected value $E(f)$ of $f$ may be computed as 

$$E(f) = \lim_{k\to\infty}\frac{\zeta_k^{(p)}(f;0)}{\zeta_k^{(p)}(0)}.$$

(This is an analogue of \cite[Cor.\! 5.5]{CL83}, only for local groups.)\medskip

Often, it is easier to compute the $\zeta$-function of $f$ than to compute the expected value of $f$ directly. In this way, Cohen and Lenstra compute explicit formulas for the rank and the order of groups, and for some other functions (cf. the discussion in section \ref{numresults}).

Their approach has two more advantages. Firstly, we get almost for free a treatment of the twisted probability measure $P_u$ discussed in section \ref{sect:u-probs}, which is of special interest for number field extensions that are not imaginary quadratic (see \cite{Mal06} or \cite[chap.\! 6]{Len09} for details).

More precisely, we may compute the expected value $E_u(f)$ of $f$ with respect to the twisted probability measure $P_u$ as

$$E_u(f) = \lim_{k\to\infty} \frac{\zeta_{k}^{(p)}(f;u)}{\zeta_{k}^{(p)}(u)}$$

(\cite[Cor. 5.5]{CL83}).

The second advantage is that the approach gives a way to obtain some statements about the global setting. We may analogously define a $\zeta$-function over the global set $\GG$, it only has a smaller domain of convergence. More precisely, it converges for $\Re(s) > 0$ and has a simple pole in $0$. Therefore, under some technical conditions the expected value of certain \emph{global} functions $f:\GG\to\CC$ may be computed as 

$$E(f) = \lim_{s\to 0} \lim_{k\to\infty}\frac{\zeta_k(f;s)}{\zeta_k(s)}$$

(\cite[Thm. 5.5]{CL83}), and we only need to compute the residues of the global $\zeta$-functions. However, note that we cannot use this approach to define a probability measure on $\GG$. Taking the sets for which the above limit exists only yields a content (i.e., a ``measure'' that is only finitely additive). For a thorough discussion, see \cite[chap.\! 5]{Len09}.

%
%
%
%
%
%

\section{The Cohen-Lenstra heuristic: Interpretation via conjugacy classes}\label{sect:conjugacyclasses}

Recall that $p$ is a fixed prime number.

Consider the general linear group $\GL(n, p)$\nomenclature[Glnp]{$\GL(n,p)$}{general linear group over $\FF_p$}
 of invertible $n\times n$-matrices over $\FF_p$. Then each conjugacy class can be represented by a matrix in \emph{Jordan-Chevalley normal form}\index{Jordan-Chevalley normal form}.
 
Before I describe this form, let me define the \emph{companion matrix}\index{companion matrix} $C(\varphi)$ of a normalized polynomial $\varphi = X^m+a_{m-1}X^{m-1}+\ldots+a_1X+a_0$. We set $C(\varphi)$ to be the $m\times m$-matrix 

$$ C(\varphi) := 
\left(\begin{matrix}
0 & 1 & 0 & \ldots & 0 \\
0 & 0 & 1 & \ldots & 0 \\
\vdots & \vdots & \vdots & \ddots & \vdots \\
0 & 0 & 0 & \ldots & 1 \\
-a_0 & -a_1 & -a_2 & \ldots & -a_{m-1}
\end{matrix}\right).
$$

Now back to the normal form. It looks as follows: For every monic irreducible polynomial $\phi$ of degree $m$ over $\FF_p$ and every positive integer $s$ we may have an arbitrary number (possibly $0$) of ($\phi,s$)-Jordan blocks. Each Jordan block is a square of size $sm$ and is the companion matrix of the polynomial $\phi^s$. The normal form then has the form

$$ 
\left(\begin{matrix}
J_1 & 0 & 0 & \ldots & 0 \\
0 & J_2 & 0 & \ldots & 0 \\
0 & 0 & J_3 & \ldots & 0 \\
\vdots & \vdots & \vdots & \ddots & \vdots \\
0 & 0 & 0 & \ldots & J_r \\
\end{matrix}\right),
$$

where $J_k$ runs through all the Jordan blocks. We only require that the sizes of the Jordan blocks add up to $n$.

The normal form works over every field. In section \ref{sect:cycleindex}, we will also work over the field $\FF_{p^i}$, but for the basic theorems it suffices to consider $\FF_p$. Note that over an algebraically closed field (such as $\CC$) all irreducible polynomials are linear and the Jordan-Chevalley normal form reduces to a slight variation of the ordinary Jordan normal form.

In order to specify a normal form we must specify for every monic irreducible polynomial $\phi$ and any $s > 0$ how many ($\phi,s$)-Jordan blocks occur. In other words, for each $\phi$ we must specify a partition. We call this partition $\underline{\lambda}_{\phi}$. For example, if we have $2$ blocks of size $3m$ and $3$ blocks of size $m$ then this corresponds to the partition $(3,3,1,1,1)$. In order for the matrix to be invertible we must require that $\underline{\lambda}_{X} = ()$.

On the other hand, every collection of partitions $(\underline{\lambda}_{\phi})_{\phi}$ with the properties

\begin{itemize}
	\item $\underline{\lambda}_{X} = \underline{0}$ and
	\item $\sum_{\phi,s} (\deg{\phi})\lambda_{\phi,s} = n$
\end{itemize}

defines a (unique) conjugacy class in $\GL(n,p)$.\medskip

From now on, we fix a monic polynomial $\phi \neq X$ over $\FF_p$ of degree $1$.

Let $\underline{\lambda}$ be a partition. Pick a random matrix in $\GL(n,p)$ uniformly at random. Then we get a certain probability for the event $\underline{\lambda}_{\phi} = \underline{\lambda}$. 
 
Fulman proved the following theorem.

\begin{theorem}\label{thm:conjugacy}
Let $\phi$ be any monic polynomial over $\FF_p$ of degree $1$ and let $\underline{\lambda}$ be a partition. As $n\rightarrow \infty$, the probability (in the sense above) that $\underline{\lambda}_{\phi} = \underline{\lambda}$ for a random matrix in $\GL(n,p)$ (chosen uniformly at random) converges to the CL-probability $P(\underline{\lambda})$.
\end{theorem}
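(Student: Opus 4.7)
The plan is to reduce the problem to a module-theoretic count at $\phi$, factor the centralizer of a matrix across primes, and evaluate the $n\to\infty$ limit via an Euler-product identity. First, write $\phi = X - a$ with $a \in \FF_p^\times$. The map $M \mapsto a^{-1}M$ is a measure-preserving bijection of $\GL(n,p)$ which carries $\underline{\lambda}_{\phi}$ to $\underline{\lambda}_{X-1}$ (scaling by $a^{-1}$ shifts the $a$-eigenspace to the $1$-eigenspace and preserves Jordan block sizes), so we may assume $\phi = X-1$. A matrix $M \in \GL(n,p)$ endows $V := \FF_p^n$ with the structure of a finitely generated $\FF_p[X,X^{-1}]$-module, and the primary decomposition gives $V = \bigoplus_\psi V_\psi$ indexed by monic irreducibles $\psi \neq X$, each $V_\psi$ a finite module over the completed local ring $R_\psi$. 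For $\psi = \phi$ we have $R_\phi \cong \FF_p[[T]]$, a complete DVR with residue field $\FF_p$; its finite torsion modules are classified by partitions exactly as finite abelian $p$-groups are, and the automorphism count is given by the same formula as in theorem \ref{thm:sizeofautomorphisms}. Under this dictionary, $\underline{\lambda}_\phi(M) = \underline{\lambda}$ iff $V_\phi$ is the $R_\phi$-module attached to $\underline{\lambda}$.

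Next, by orbit--stabilizer the number of matrices with a given Jordan type equals $\#\GL(n,p)/\#C_{\GL(n,p)}(M)$, and the centralizer factors as $\prod_\psi \Aut_{R_\psi}(V_\psi)$. Writing $d := |\underline{\lambda}|$ and $P_n(\underline{\lambda})$ for the probability under consideration, this yields
\[
P_n(\underline{\lambda}) \;=\; \frac{1}{\#\Aut(\underline{\lambda})}\cdot S_{n-d}, \qquad S_m \;:=\; \sum_{\substack{(\underline{\mu}_\psi)_{\psi\neq\phi,X}\\ \sum_\psi (\deg\psi)|\underline{\mu}_\psi| = m}} \;\prod_\psi \frac{1}{\#\Aut_{R_\psi}(\underline{\mu}_\psi)}.
\]
The structural point is that factoring the centralizer isolates $1/\#\Aut(\underline{\lambda})$ cleanly, packing all remaining $n$-dependence into $S_{n-d}$.

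To evaluate $\lim_n S_{n-d}$, I would use the Kung--Stong-style identity $\sum_{T\text{ Jordan type in }\GL(m,p)}1/\#C(T) = 1$ for each $m\geq 0$, whose generating function is $\sum_m x^m = 1/(1-x)$. The factorization of $\#C(T)$ across primes turns this into an Euler product:
\[
\frac{1}{1-x} \;=\; \prod_{\psi\neq X}\;\sum_{\underline{\mu}} \frac{x^{(\deg\psi)|\underline{\mu}|}}{\#\Aut_{R_\psi}(\underline{\mu})}.
\]
Pulling out the $\phi$-factor gives $\sum_m S_m x^m = \bigl[(1-x)\sum_{\underline{\mu}} x^{|\underline{\mu}|}/\#\Aut_{R_\phi}(\underline{\mu})\bigr]^{-1}$. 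By monotone convergence, the inner sum tends to $w(\GG_p)$ as $x\to 1^-$ (theorem \ref{thm:weightsum}), so $\sum_m S_m x^m \sim 1/[(1-x)\,w(\GG_p)]$ near $x=1$. A standard Tauberian extraction (the generating function is $A(x)/(1-x)$ with $A$ continuous and positive at $x=1$) yields $S_m \to 1/w(\GG_p)$, hence $\lim_n P_n(\underline{\lambda}) = 1/[\#\Aut(\underline{\lambda})\,w(\GG_p)] = P(\underline{\lambda})$, as claimed.

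The main obstacle is the Tauberian extraction step: rigorously passing from the asymptotic behavior of $\sum_m S_m x^m$ at $x=1$ to a statement about individual coefficients. Since the coefficients are non-negative and the generating function has the explicit form $A(x)/(1-x)$ with $A$ analytic in a neighborhood of $x=1$ and $A(1)=1/w(\GG_p)$, this fits the Hardy--Littlewood framework, but the regularity of $A$ on the unit circle (non-vanishing of the denominator $\sum_{\underline{\mu}}x^{|\underline{\mu}|}/\#\Aut_{R_\phi}(\underline{\mu})$ for $|x|=1$) needs verification. An alternative that sidesteps Tauberian machinery altogether is to expand the local factors using the $q$-series identities behind theorem \ref{thm:weightsum} into rational functions in $x$ with explicit poles, and then extract coefficient asymptotics directly by partial-fraction decomposition.
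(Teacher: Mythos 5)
Your argument is essentially correct, and it is a genuine proof where the paper gives none: the paper simply cites Fulman's thesis, so your self-contained derivation (which in effect re-proves the relevant special case of the Kung--Stong cycle index, namely the specialization in which all variables except those at $\phi$ are set to $1$) is a real addition rather than a restatement. The reduction to $\phi=X-1$, the identification of the centralizer with $\prod_\psi \Aut_{R_\psi}(V_\psi)$, the resulting factorization $P_n(\underline{\lambda})=S_{n-d}/\#\Aut(\underline{\lambda})$, and the Euler product for $\sum_m S_m x^m$ are all sound. The one step you flag as an obstacle --- the Tauberian extraction --- is in fact not needed, and your proposed alternative is the right move but can be made even simpler. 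The denominator $F(x):=\sum_{\underline{\mu}} x^{|\underline{\mu}|}/\#\Aut_{R_\phi}(\underline{\mu})$ is given in closed form by the $x$-graded refinement of Hall's identity (theorem \ref{thm:weightsum}):
\begin{equation*}
F(x)\;=\;\prod_{i=1}^{\infty}\frac{1}{1-x\,p^{-i}},
\qquad\text{so}\qquad
\sum_{m\geq 0}S_m x^m\;=\;\frac{1}{1-x}\prod_{i=1}^{\infty}\bigl(1-x\,p^{-i}\bigr).
\end{equation*}
The product $A(x):=\prod_{i\geq 1}(1-xp^{-i})$ is analytic on the disc $|x|<p$, hence its Taylor coefficients $a_j$ satisfy $\sum_j|a_j|<\infty$, and the coefficients of $A(x)/(1-x)$ are the partial sums $S_m=\sum_{j\leq m}a_j$, which converge to $A(1)=\prod_{i\geq 1}(1-p^{-i})=1/w(\GG_p)$ with no Hardy--Littlewood machinery and no need to examine the unit circle. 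This closes the gap and yields $\lim_n P_n(\underline{\lambda})=w(\underline{\lambda})/w(\GG_p)=P(\underline{\lambda})$. Two small points to make explicit in a final write-up: the theorem implicitly requires $\phi\neq X$ (your normalization $\phi=X-a$ with $a\in\FF_p^{\times}$ already assumes this, but say so), and you should justify that the automorphism-count formula for finite $\FF_p[[T]]$-torsion modules coincides with that of theorem \ref{thm:sizeofautomorphisms} for abelian $p$-groups --- it does, since both are finite modules over a complete DVR with residue field of size $p$ and the formula depends only on that residue cardinality and the partition.
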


\begin{proof}
\cite[Sect. 3.3, Cor. 5 and Sect. 2.7, Lemma 6 and Thm. 5 with $u=1$ and $N\rightarrow \infty$]{Ful97}.
\end{proof}

\begin{remark}~
\begin{itemize}
\item Fulman uses in his thesis a slightly different way of taking the $n\rightarrow \infty$ limit. Rather, he chooses a parameter $0<u<1$, then picks the integer $n$ with probability $(1-u)u^n$ and chooses a random matrix from $\GL(n,p)$ (cf. \cite[p.557f.]{Ful99}). Then he proceeds as above. However, it is easy to see that letting $u \rightarrow 1$ in this setting yields the same limit as letting $n\rightarrow \infty$ in the theorem above. We only need to interchange two limits, but this is no problem since all statements concern formal power series identities with positive convergence radius.

The reason why Fulman chose the parameter $u$ instead of $n$ will become clear in section \ref{sect:cycleindex} about the cycle index.
\item Fulman studies also the probability distribution for monic polynomials $\phi$ of higher degree. This yields similar distributions with similar formulas, only it does not give exactly the Cohen-Lenstra probability. We will encounter these other distributions in the context of the Kung-Stong cycle index in section \ref{sect:cycleindex}.
\end{itemize}
\end{remark}

The theorem allows us to transfer a multitude of methods and results from a whole community of researchers to the Cohen-Lenstra heuristic. I start with reviewing a very interesting interpretation of the Cohen-Lenstra heuristic in terms of Markov chains due to Fulman.

\section{Interpretation via Markov chains}\label{Markovchains}

In his PhD thesis, Fulman gave two interpretations of the Cohen-Lenstra probability. One as the outcome of a probabilistic algorithm, one as the weight in the Young lattice with certain transition probabilities. I review both interpretations in the setting that is relevant to us.

First I present what Fulman calls the ``Young Tableau Algorithm'' (\cite{Ful99}). Recall that $p$ is a fixed prime.\pagebreak[2]

\begin{algorithm}\index{Markov chains algorithm}~

\begin{enumerate}
	\item[0.] Start with $\underline{\lambda}$ the empty partition. Also start with $N = 1$ and with a collection of coins indexed by the natural numbers, such that coin $i$ has probability $\frac{1}{p^i}$ of heads and $1- \frac{1}{p^i}$ of tails.
	\item[1.] Flip coin $N$. If the outcome is tails then set $N:= N+1$ and redo step 1, otherwise go to step 2.
	\item[2.] Choose an integer $S > 0$ according to the following rule. Set $S:=1$ with probability $\frac{p^{N-\underline{\lambda}_1}-1}{p^N-1}$. For $s>1$, set $S:=s$ with probability $\frac{p^{N-\underline{\lambda}_s}-p^{N-\underline{\lambda}_{s-1}}}{p^N-1}$. Then increase $\underline{\lambda}_S$ by $1$ and go to step 1.
\end{enumerate}

In step 2, we use the convention that all undefined entries of $\underline{\lambda}$ are $0$. In particular, if we increase some $\underline{\lambda}_s$ that is not defined then after increasing the entry is $1$.

The algorithm does not halt, but $\underline{\lambda}$ converges against some limit partition $\underline{\lambda}_{\infty}$ (cf. theorem \ref{thm:Youngtableutermination} below). The ouput of the algorithm is the \emph{conjugate} partition $\underline{\lambda}_{\infty}'$ of $\underline{\lambda}_{\infty}$.

\end{algorithm}

\begin{example}

Assume that we are at step 1 with $\underline{\lambda} = (3,2,1,1)$, so the Young diagram of $\underline{\lambda}$ is

\vbox{\nboxes{3}\nmarkedboxes{0}{$\times$}}\nointerlineskip
\vbox{\nboxes{2}\nmarkedboxes{0}{$\times$}}\nointerlineskip
\vbox{\nboxes{1}\nmarkedboxes{0}{$\times$}}\nointerlineskip
\vbox{\nboxes{1}\nmarkedboxes{0}{$\times$}}\nointerlineskip
\medskip

Assume further that $N=4$ and that coin $4$ comes up heads, so we go to step $2$. We add to $\underline{\lambda}_1$ with probability $\frac{p-1}{p^4-1}$, to $\underline{\lambda}_2$ with probability $\frac{p^2-p}{p^4-1}$, to $\underline{\lambda}_3$ with probability $\frac{p^3-p^2}{p^4-1}$, to $\underline{\lambda}_4$ with probability $0$, and to $\underline{\lambda}_5$ with probability $\frac{p^4-p^3}{p^4-1}$.

Assume that we choose $S=1$ and increase $\underline{\lambda}_1$, thus getting $\underline{\lambda} = (4,2,1,1)$ with Young diagram

\vbox{\nboxes{4}\nmarkedboxes{0}{$\times$}}\nointerlineskip
\vbox{\nboxes{2}\nmarkedboxes{0}{$\times$}}\nointerlineskip
\vbox{\nboxes{1}\nmarkedboxes{0}{$\times$}}\nointerlineskip
\vbox{\nboxes{1}\nmarkedboxes{0}{$\times$}}\nointerlineskip
\medskip

We return to step $1$ and still have $N=4$. Assume that again coin $4$ comes up heads and we go to step 2. Now we add to $\underline{\lambda}_1$ with probability $0$, to $\underline{\lambda}_2$ with probability $\frac{p^2-1}{p^4-1}$, to $\underline{\lambda}_3$ with probability $\frac{p^3-p^2}{p^4-1}$, to $\underline{\lambda}_4$ with probability $0$, and to $\underline{\lambda}_5$ with probability $\frac{p^4-p^3}{p^4-1}$. Then we return to step 1.

\end{example}

\begin{remark}
The name ``Young Tableau Algorithm'' refers to the concepts of Young tableaux. A Young tableau is a Young diagram where the boxes are labelled with $1,\ldots,n$ ($n$ the size of the Young diagram). The labels must be given in a way that for any $1\leq i \leq n$ the boxes $1,\ldots,i$ form again a Young diagram. You may think of a Young tableau as a Young diagram together with an ordering which tells you how to build up the diagram from scratch. Since the algorithm does exactly this (building up Young diagrams block by block), the name is appropriate.
\end{remark}

\begin{theorem}\label{thm:Youngtableutermination}
With probability $1$, the algorithm outputs a \emph{finite} partition. For any given partition $\underline{\lambda}$, the probability that the algorithm outputs $\underline{\lambda}$ equals the Cohen-Lenstra probability $P(\underline{\lambda})$. 
\end{theorem}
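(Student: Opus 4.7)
My plan splits the proof into a termination argument and an explicit distributional calculation.

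For termination, at each stage $N$ the number $K_N$ of boxes added to $\underline{\lambda}$ before the algorithm advances to stage $N+1$ equals the number of successive heads of coin $N$, so $K_N$ is geometric with $P(K_N=k)=p^{-Nk}(1-p^{-N})$ and $E[K_N]=1/(p^N-1)$. Since $\sum_{N\ge 1}1/(p^N-1)<\infty$, the total number of boxes added is finite almost surely, so $\underline{\lambda}_\infty$ is a finite partition with probability $1$.

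For the distributional claim, set $\underline{\sigma}=\underline{\lambda}'$; the claim becomes $P(\underline{\lambda}_\infty=\underline{\sigma})=P(\underline{\lambda})$, which by theorems \ref{thm:sizeofautomorphisms} and \ref{thm:weightsum} expands to
\begin{equation*}
P(\underline{\lambda}_\infty=\underline{\sigma})\;=\;\frac{\prod_{i\ge 1}(1-p^{-i})}{p^{\sum_j\underline{\sigma}_j^2}\,\prod_j\prod_{s=1}^{\underline{\sigma}_j-\underline{\sigma}_{j+1}}(1-p^{-s})}.
\end{equation*}
Since the probability that any box is added at a stage $M>N$ is bounded by $\sum_{M>N}p^{-M}\to 0$, this reduces to computing $\lim_{N\to\infty}P(\pi^{(N)}=\underline{\sigma})$, where $\pi^{(N)}$ denotes the value of $\underline{\lambda}$ at the end of stage $N$.

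Every trajectory producing $\pi^{(N)}=\underline{\sigma}$ factors as (i) a standard Young tableau $T$ of shape $\underline{\sigma}$, recording the order in which boxes are placed, and (ii) a weakly increasing sequence $c_i\le N_i\le N$, where $N_i$ is the stage at which the $i$-th box (at column $c_i$) is added. Its probability is
\begin{equation*}
\Bigl(\prod_{M=1}^N(1-p^{-M})\Bigr)\cdot\prod_{i=1}^{|\underline{\sigma}|}\frac{p^{-N_i}\bigl(p^{-(c_i-1)}-p^{-L_i}\bigr)}{1-p^{-N_i}},
\end{equation*}
where $L_i$ is the length of row $r_i(T)-1$ just before step $i$ (equal to $N_i$ when $r_i=1$, otherwise determined by $T$ alone). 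Summing over the stage vectors via geometric series and over the tableaux via the hook-length formula should reproduce the target right-hand side.

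The principal obstacle is the coupling in the row-$1$ factors $p^{-(c_i-1)}-p^{-N_i}$, which prevents a clean separation of the sum over $T$ from the sum over the $N_i$. I would circumvent this by adopting Fulman's alternative viewpoint: regard $(\pi^{(N)})_N$ as an inhomogeneous Markov chain on the Young lattice whose one-step transition kernel factors rowwise, and verify by induction on $N$ a closed-form product expression for $P(\pi^{(N)}=\underline{\sigma})$ that matches Fulman's conjugacy-class formulas for $\GL(n,p)$ (cf.\ the remark following theorem \ref{thm:conjugacy}). Passing to the limit $N\to\infty$ then yields the claim directly, avoiding the brute-force tableau enumeration.
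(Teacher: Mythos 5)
Your termination argument is correct and complete: the number of boxes added while coin $N$ is active is geometric with mean $1/(p^N-1)$, these means are summable, so the total number of boxes is almost surely finite. This is actually more self-contained than what the paper offers, since the paper only cites Fulman and remarks that his $u<1$ argument carries over to $u=1$. Your reduction of the distributional claim to $\lim_{N\to\infty}P(\pi^{(N)}=\underline{\sigma})$ via the union bound $\sum_{M>N}p^{-M}$ is also fine, your expansion of the target probability in terms of $\underline{\sigma}=\underline{\lambda}'$ agrees with theorems \ref{thm:sizeofautomorphisms} and \ref{thm:weightsum} (using $\sum_{i,j}\min(\lambda_i,\lambda_j)=\sum_j(\lambda'_j)^2$), and your per-trajectory probability formula is the right one.

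The gap is that the distributional identity --- the actual content of the theorem --- is never established. You correctly observe that the row-one factors $p^{-(c_i-1)}-p^{-N_i}$ couple the sum over tableaux to the sum over stage vectors, and then declare that you would instead ``verify by induction on $N$ a closed-form product expression for $P(\pi^{(N)}=\underline{\sigma})$'' without stating that expression or carrying out the verification. The expression you would need is essentially equation \eqref{eq:algFulman} (read with the appropriate conjugation, i.e.\ with $w$ evaluated at the conjugate of the algorithm's running partition), and the inductive step is not routine: during a single stage $N$ the chain adds a geometrically distributed number of boxes with state-dependent row probabilities, so passing from $P(\pi^{(N-1)}=\underline{\tau})$ to $P(\pi^{(N)}=\underline{\sigma})$ already requires summing over all saturated chains in the Young lattice from $\underline{\tau}$ to $\underline{\sigma}$ --- exactly the coupling you were trying to avoid, now localized to one stage. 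Collapsing that sum is a genuine $q$-series identity (it is the key lemma Fulman proves in \cite{Ful99}), and it is absent from your proposal; the naive hook-length formula you invoke does not apply because of the $q$-weights. As written, your argument proves termination and sets up the computation correctly, but only gestures at the claim that the limiting law is $P$.
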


\begin{proof}[Proof of theorem \ref{thm:Youngtableutermination}]~

\cite[Thm. 1]{Ful99} with $u=1$ and $q=p$. The author states termination of the algorithm only for the case $u<1$, but his proof implies termination for $u=1$ as well.
\end{proof}

Since the concept of such an algorithm may be unfamiliar to the reader, let me rephrase the finiteness statement of the theorem. Let us say the algorithm has been running for some (finite) time and is in some state $\underline{\lambda}$. Then there is a \emph{positive} probability that the algorithm will not add any more blocks to $\underline{\lambda}$ in all the (infinitely many) forthcoming steps of the algorithm. Thus, there is a positive probability that the algorithm outputs $\underline{\lambda}$. On the other hand, the probability that the algorithm adds infinitely many blocks to $\underline{\lambda}$ in the (infinite) sequel of the algorithm is $0$. Hence, with probability $1$ the algorithm outputs a finite partition.

\begin{remark}

It may be of interest to state one intermediate result in Fulman's proof. Namely, the probability $P_{alg}^N(\underline{\lambda})$ that the generic partition of the algorithm equals $\underline{\lambda}$ at the time when coin $N$ comes up tails is

\begin{equation}\label{eq:algFulman}P_{alg}^N(\underline{\lambda}) = \begin{cases} 
\displaystyle\left(\prod_{i=N-\underline{\lambda}_1+1}^{N}(1-p^{-i})\right)\left(\prod_{i=1}^N(1-p^{-i})\right)w(\underline{\lambda}) & \text{ if } \underline{\lambda}_1 \leq N,\vspace{1ex} \\
\displaystyle \ 0 & \text{ if } \underline{\lambda}_1 > N,
\end{cases}\end{equation}

where $w(\underline{\lambda})$ is the Cohen-Lenstra weight of $\underline{\lambda}$.

Evidently, this converges to $P(\underline{\lambda})$ as $N \rightarrow \infty$.

Formula \eqref{eq:algFulman} is of particular interest because it also occurs in a different context in a paper \cite{FW89} of Friedman of Washington. More precisely, the probability that  $\underline{\lambda}$ is the intermediary result in Fulman's algorithm when coin $N$ comes up tails equals the probability that a random matrix $A\in \ZZ_p^{n\times n}$ (with respect to the Haar measure) has cokernel $\underline{\lambda} \in \PPart \cong \GG_p$.

So the algorithm is compatible with the graded (by $n$) structure of the process of choosing $n$ generators and $n$ relations described in \cite{FW89} and \cite[sect.\! 2.2.3]{Len09}.
\end{remark}

\section{Interpretation in the Young lattice}

Fulman's second interpretation is perhaps even more interesting from our point of view, since it connects more directly to the \emph{CL-weight} rather than to the \emph{CL-probability}.

This approach makes use of the \emph{Young lattice}. The Young lattice is a directed graph with vertex set $\GG_{\PPart}$ ($=\GG_p$, but independent of $p$!). There is a directed edge from $\underline{\lambda}$ to $\underline{\mu}$ if and only if the Young diagram of $\underline{\lambda}$ is contained in the Young diagram of $\underline{\mu}$ and $\size(\underline{\lambda}) = \size(\underline{\mu}) -1$.\index{Young lattice}

For the algorithm we will index the vertices by the conjugate $\underline{\lambda}'$ of $\underline{\lambda}$. This does not affect the edge set. Note that there is a directed edge from $\underline{\lambda}$ to $\underline{\mu}$ if and only if there is an index $i_0$ such that $\underline{\mu}'_{i_0} = \underline{\lambda}'_{i_0} +1$ and $\underline{\mu}'_{i} = \underline{\lambda}'_{i}$ for all $i \neq i_0$.

\begin{theorem}
Put weights $m_{\underline{\lambda}',\underline{\mu}'}$ on the edges in the Young lattice as follows:

\begin{enumerate}
	\item $$m_{\underline{\lambda}',\underline{\mu}'} = \frac{1}{p^{\underline{\lambda}'_1}(p^{\underline{\lambda}'_{1}+1}-1)} \qquad \text{if } \underline{\mu}'_1 = \underline{\lambda}'_1 +1.$$
	\item $$m_{\underline{\lambda}',\underline{\mu}'} = \frac{p^{-\underline{\lambda}'_{s}}-p^{-\underline{\lambda}'_{s-1}}}{p^{\underline{\lambda}'_{1}}-1} \qquad \text{if } \underline{\mu}'_s = \underline{\lambda}'_s +1 \text{ for $s>1$.}$$
\end{enumerate}

Then the following formula holds for the Cohen-Lenstra weight $w$ and for any $\underline{\lambda}\in \GG_{\PPart}$ of size $\lambda$:

$$w(\underline{\lambda}) = \sum_{\gamma'}\prod_{i=0}^{\lambda -1} m_{\gamma'_i,\gamma'_{i+1}},$$

where $\gamma' = (\gamma'_1,\ldots,\gamma'_{\lambda})$ runs over all directed paths from the empty partition to $\underline{\lambda}'$ in the Young lattice.
\end{theorem}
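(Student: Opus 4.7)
The plan is to prove this by induction on $|\underline{\lambda}|$, by reducing the path-sum formula to a single-step recursion. Set
$$W(\underline{\mu}') := \sum_{\gamma'} \prod_{i=0}^{|\underline{\mu}|-1} m_{\gamma'_i,\gamma'_{i+1}},$$
where $\gamma'$ ranges over all directed paths from the empty partition to $\underline{\mu}'$. Grouping paths by their last edge, $W$ satisfies
$$W(\underline{\mu}') \;=\; \sum_{\underline{\lambda}' \to \underline{\mu}'} W(\underline{\lambda}') \, m_{\underline{\lambda}',\underline{\mu}'}, \qquad W(\emptyset) = 1.$$
Since both the initial condition $W(\emptyset)=1$ and this recursion determine $W$ uniquely on all of $\GG_{\PPart}$, the theorem is equivalent to showing that the Cohen--Lenstra weight $w$ satisfies the same recursion and the same base case. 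The base case holds since $w(0) = 1/\#\Aut(0) = 1$.

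For the inductive step I would proceed directly: the partitions $\underline{\lambda}$ with $\underline{\lambda}' \to \underline{\mu}'$ are precisely those obtained from $\underline{\mu}$ by removing one box, so the sum is finite. Using the explicit formula from Theorem \ref{thm:sizeofautomorphisms}, I would compute the ratio $w(\underline{\lambda})/w(\underline{\mu}) = \#\Aut(\underline{\mu})/\#\Aut(\underline{\lambda})$ for each predecessor. A small sanity check (for instance $\underline{\mu} = (2,1)$, where the two predecessors $(1,1)$ and $(2)$ contribute $\tfrac{1}{p^3(p-1)^2(p+1)}\cdot\tfrac{1}{p^2}$ and $\tfrac{1}{p(p-1)}\cdot\tfrac{1}{p(p^2-1)}$, summing to $\tfrac{1}{p^3(p-1)^2} = w((2,1))$) indicates that the verification is correct and telescoping is at play.

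The main obstacle is the bookkeeping for how removing a box alters the $(e_i,r_i)$-decomposition of $\underline{\mu}$. Four subcases arise depending on whether the shortened row has multiplicity one or greater, and whether it becomes empty, and each case changes the double product $\prod_{i,j} p^{\min(e_i,e_j)r_ir_j}$ by a different factor. A cleaner alternative, which I would pursue in parallel, is to derive the identity from the Young Tableau Algorithm of Section \ref{Markovchains}: by conditioning on the sequence of box-additions (and marginalizing over the coin index $N$ at which each addition happens) one should recognize $m_{\underline{\lambda}',\underline{\mu}'}$ as the $N$-averaged single-step transition probability of the algorithm, up to a state-independent normalization absorbed into $w(\GG_p)^{-1}$. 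Combined with Theorem \ref{thm:Youngtableutermination}, which identifies the algorithm's output distribution with $P(\cdot)$, this yields the path sum directly; the hard part is identifying the correct normalization so that $w$, rather than $P$, appears on the left-hand side. Either route works, and the probabilistic one has the advantage of explaining, rather than merely verifying, the somewhat opaque edge weights.
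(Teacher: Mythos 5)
Your reduction of the theorem to the one\hyp{}step recursion $W(\underline{\mu}') = \sum_{\underline{\lambda}'\to\underline{\mu}'} W(\underline{\lambda}')\, m_{\underline{\lambda}',\underline{\mu}'}$ with $W(\emptyset)=1$ is correct and is the right way to set the problem up: since the Young lattice is graded by size and every nonempty partition has a predecessor, the recursion determines $W$ uniquely, so the theorem is equivalent to the single identity
$$w(\underline{\mu}) \;=\; \sum_{\substack{\underline{\lambda}\subset\underline{\mu}\\ |\underline{\lambda}|=|\underline{\mu}|-1}} w(\underline{\lambda})\, m_{\underline{\lambda}',\underline{\mu}'}.$$
The gap is that this identity is the entire content of the theorem, and you do not prove it: you verify it for $\underline{\mu}=(2,1)$ and defer the general case to future bookkeeping (``I would compute\dots'', ``four subcases arise\dots''), while the alternative route through the Young Tableau Algorithm is left exactly at the step you yourself flag as hard. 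On that second route, note that the edge weights are \emph{not} the $N\to\infty$ limits of the algorithm's transition probabilities --- the denominator $p^{\underline{\lambda}'_1}-1$ in case 2 has replaced the algorithm's $p^N-1$ --- so identifying $m_{\underline{\lambda}',\underline{\mu}'}$ as an ``$N$-averaged transition probability'' is precisely the nontrivial computation, not a formality. As written, the proposal is a sound plan plus one data point, not a proof. (There is also a slip in the sanity check: the contribution of the predecessor $(1,1)$ should be $w((1,1))\cdot p^{-2}=\frac{1}{p(p-1)^2(p+1)}\cdot\frac{1}{p^2}$; the product as you wrote it, $\frac{1}{p^3(p-1)^2(p+1)}\cdot\frac{1}{p^2}$, does not sum with the other term to $\frac{1}{p^3(p-1)^2}$.)

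If you want to finish the direct route, the four-case bookkeeping collapses if you first rewrite the weight in terms of the conjugate partition,
$$w(\underline{\lambda}) \;=\; p^{-\sum_i(\underline{\lambda}'_i)^2}\prod_{i\ge 1}\ \prod_{k=1}^{\underline{\lambda}'_i-\underline{\lambda}'_{i+1}}(1-p^{-k})^{-1}$$
(equivalent to theorem \ref{thm:sizeofautomorphisms}): adding a box in row $s$ of $\underline{\lambda}'$ then changes $w$ by an explicit factor depending only on $\underline{\lambda}'_{s-1},\underline{\lambda}'_s,\underline{\lambda}'_{s+1}$, and the sum of $w(\underline{\lambda})m_{\underline{\lambda}',\underline{\mu}'}/w(\underline{\mu})$ over the admissible $s$ telescopes to $1$. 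For comparison, the paper gives no argument at all for this theorem but cites \cite[Thm. 2]{Ful99}, so your outline, once the inductive step is actually carried out, would be a genuine proof rather than a reproduction of the paper's.
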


\begin{proof}
\cite[Thm. 2]{Ful99}
\end{proof}

\begin{remark}
A brief calculation shows that for any partition $\underline{\lambda}\in \PPart$ the sum of the weights of edges out of $\underline{\lambda}\neq ()$ is $\frac{p}{p^{\underline{\lambda}'_{1}+1}-1} < 1$. (For $\lambda = ()$, it is $\frac{1}{p-1} < 1$.) Therefore, the edge weights can also be viewed as transition probabilities, provided that we allow for halting.
\end{remark}

\section{The Kung-Stong cycle index}\label{sect:cycleindex}

This is a powerful tool for investigating conjugacy classes of groups, deve\-loped by Kung, Stong and Fulman. The techniques apply also to more general algebraic groups, but for us only the group $\GL(n,p)$ is of interest. Recall (section \ref{sect:conjugacyclasses}) that a conjugacy class of a matrix $M \in \GL(n,p)$ is described by assigning a partition $\underline{\lambda}_{\phi}(M)$ to each monic irreducible polynomial $\phi \neq X$ such that $\sum_{\phi,s} (\deg{\phi})\underline{\lambda}_{\phi,s}(M) = n$. 

\begin{definition}\index{cycle index}\nomenclature[ZGL]{$Z_{\GL(n,p)}$}{cycle index of $\GL(n,p)$}
For all $\phi \neq X$ and all partitions $\underline{\lambda}$, let $x_{\phi,\underline{\lambda}}$ be a variable. Then the \emph{cycle index $Z_{\GL(n,p)}$} is defined as follows:

$$Z_{\GL(n,p)} := \frac{1}{|\GL(n,p)|}\sum_{M \in \GL(n,p)}\prod_{\phi \neq X}x_{\phi,\underline{\lambda} _{\phi}(M)}.$$
\end{definition}

This cycle index is connected with the Cohen-Lenstra probability. In order to formulate the connection, we embed the CL-probability in a larger class of probability measures on $\GG_p$. For any power $p^i$ of $p$ and real number $0<u<1$, we define a probability distribution $P_{u,p^i}$ on $\GG_p$ as follows. Fix a monic polynomial $\phi \neq X$ over $\FF_{p^i}$ of degree $1$. Choose an integer $n$ randomly according to the probability distribution $k \mapsto (1-u)u^k$. Now pick a matrix $M \in \GL(n,p^i)$ uniformly at random. Then the pair $(M,\phi)$ defines a partition $\underline{\lambda}_{\phi}(M)$. We define $P_{u,p^i}(\underline{\lambda})$ to be the probability that $\underline{\lambda}_{\phi}(M) = \underline{\lambda}$. (This is easily seen to be independent of the choice of $\phi$.)

Recall that the CL-probability is obtained from $P_{u,p^i}$ by setting $i:= 1$ and letting $u\rightarrow 1$.

Explicit formulas for $P_{u,p^i}$ are given in \cite[sect.\!\! 2]{Ful99}. (The author writes $M_{(u,q)}$ instead of $P_{u,p^i}$.) 

Now we can state the following theorem due to Kung \cite{Kun81} and Stong \cite{Sto88}:

\begin{theorem}
$$(1-u)\left(1+\sum_{n=1}^{\infty}Z_{\GL(n,p)}u^n\right) = \prod_{\phi \neq X} \sum_{\underline{\lambda}}x_{\phi,\underline{\lambda}}P_{u,p^{\deg(\phi)}}(\underline{\lambda}).$$
\end{theorem}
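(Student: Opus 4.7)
The plan is to expand both sides in terms of conjugacy classes and exploit a factorization of conjugacy class sizes across primary components. By definition, $Z_{\GL(n,p)}$ is an expectation over $\GL(n,p)$, which we rewrite as a sum over conjugacy classes. Each class is parametrized by a family $(\underline{\lambda}_\phi)_{\phi}$ of partitions (indexed by monic irreducible $\phi\neq X$) satisfying $\sum_{\phi}\deg(\phi)|\underline{\lambda}_\phi|=n$, and contributes
$$\frac{\#\mathrm{Cl}((\underline{\lambda}_\phi)_\phi)}{|\GL(n,p)|}\prod_{\phi}x_{\phi,\underline{\lambda}_\phi}.$$
Crucially, $u^n=\prod_\phi u^{\deg(\phi)|\underline{\lambda}_\phi|}$ already factors over $\phi$, so if the conjugacy-class weight factors too, the sum over all such families collapses into a product over $\phi$ of local sums. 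Matching these local factors with the definition of $P_{u,p^{\deg(\phi)}}$ then yields the claim.

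The technical heart is the centralizer/conjugacy-class factorization. For a matrix $M\in\GL(n,p)$ in Jordan--Chevalley form with invariants $(\underline{\lambda}_\phi)_\phi$, the ambient space $\FF_p^n$ decomposes as an internal direct sum of the $\phi$-primary subspaces $V_\phi$, each invariant under $M$. The restriction $M|_{V_\phi}$ makes $V_\phi$ into a free module of rank $|\underline{\lambda}_\phi|$ over $\FF_p[X]/(\phi)\cong \FF_{p^{\deg(\phi)}}$, with Jordan type given by $\underline{\lambda}_\phi$ (viewed as a unipotent-type partition after a standard change of variable). The centralizer $Z_{\GL(n,p)}(M)$ is therefore the direct product of the centralizers of each restricted piece in $\GL(|\underline{\lambda}_\phi|,p^{\deg(\phi)})$, so that
$$\frac{\#\mathrm{Cl}((\underline{\lambda}_\phi)_\phi)}{|\GL(n,p)|}=\prod_{\phi\neq X}\frac{1}{|Z_{\GL(|\underline{\lambda}_\phi|,p^{\deg(\phi)})}(U_{\underline{\lambda}_\phi})|},$$
where $U_{\underline{\lambda}_\phi}$ is a unipotent matrix of the prescribed type over $\FF_{p^{\deg(\phi)}}$. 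This factorization is the main obstacle; once established, the rest is bookkeeping.

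Given the factorization, the LHS becomes
$$(1-u)\prod_{\phi\neq X}\sum_{\underline{\lambda}}x_{\phi,\underline{\lambda}}\,u^{\deg(\phi)|\underline{\lambda}|}\cdot\frac{1}{|Z_{\GL(|\underline{\lambda}|,p^{\deg(\phi)})}(U_{\underline{\lambda}})|}.$$
To match this with the RHS, I would apply the very same factorization with $p$ replaced by $q=p^{\deg(\phi)}$ to unravel $P_{u,q}(\underline{\lambda})$: pulling out the factor $u^{|\underline{\lambda}|}/|Z(U_{\underline{\lambda}})|$ corresponding to the chosen degree-one $\phi$ leaves a sum over all configurations of partitions for the remaining monic irreducibles $\chi\neq X,\phi$ over $\FF_q$. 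That residual sum is, up to the normalizing $(1-u)^{-1}$, exactly $\sum_n u^n\cdot 1=1/(1-u)$ (since the probabilities of the conjugacy classes of $\GL(n,q)$ sum to $1$), which absorbs the $(1-u)$ in front. Rearranging gives
$$P_{u,p^{\deg(\phi)}}(\underline{\lambda})=u^{\deg(\phi)|\underline{\lambda}|}\cdot\frac{1}{|Z_{\GL(|\underline{\lambda}|,p^{\deg(\phi)})}(U_{\underline{\lambda}})|},$$
after the bookkeeping, and the formula follows by taking the product over $\phi$. The only delicate point outside the centralizer factorization is justifying the interchange of sum and product, which is fine at the level of formal power series in $u$ (or for $|u|$ small enough to ensure absolute convergence of all sums involved).
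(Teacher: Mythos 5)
Your overall strategy is the right one, and it is essentially the only proof in the literature: the paper itself gives no argument (it cites Fulman's thesis, which in turn follows Kung and Stong), and that proof proceeds exactly as you propose --- group the sum defining $Z_{\GL(n,p)}$ by conjugacy classes, use $\#\mathrm{Cl}\bigl((\underline{\lambda}_\phi)_\phi\bigr)/|\GL(n,p)| = 1/|Z(M)|$, and factor the centralizer order over the $\phi$-primary components so that the generating function in $u$ splits as a product over monic irreducible $\phi\neq X$. Your centralizer factorization $|Z(M)| = \prod_\phi c_\phi(\underline{\lambda}_\phi)$, with $c_\phi(\underline{\lambda})$ the order of the centralizer of a unipotent element of type $\underline{\lambda}$ in $\GL(|\underline{\lambda}|,p^{\deg\phi})$, is correct and classical.

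The genuine gap is in what you dismiss as bookkeeping. Your concluding formula $P_{u,q_\phi}(\underline{\lambda}) = u^{\deg(\phi)|\underline{\lambda}|}/c_\phi(\underline{\lambda})$ (where $q_\phi := p^{\deg\phi}$) cannot be correct, because these numbers do not sum to $1$ over $\underline{\lambda}$: by Euler's identity (equivalently, Hall's theorem \ref{thm:weightsum} together with the fact that $c_\phi(\underline{\lambda})$ equals the automorphism group order of the corresponding abelian $q_\phi$-group) one has $\sum_{\underline{\lambda}} t^{|\underline{\lambda}|}/c_\phi(\underline{\lambda}) = \prod_{r\geq1}(1-t\,q_\phi^{-r})^{-1}$, so the true formula carries a normalizing factor, $P_{u,q_\phi}(\underline{\lambda}) = \bigl(\prod_{r\geq1}(1-u\,q_\phi^{-r})\bigr)\cdot u^{|\underline{\lambda}|}/c_\phi(\underline{\lambda})$. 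The precise false step is your claim that the ``residual sum'' over configurations of the remaining polynomials equals $1/(1-u)$: the product over \emph{all} $\phi\neq X$ of the local sums $\sum_{\underline{\lambda}}u^{\deg\phi|\underline{\lambda}|}/c_\phi(\underline{\lambda})$ equals $1/(1-u)$, so deleting the factor of the distinguished polynomial leaves $\prod_{r\geq1}(1-u\,q_\phi^{-r})\big/(1-u)$, not $1/(1-u)$. These per-polynomial normalizers are exactly what converts the raw weights on your left-hand side into honest probabilities on the right, and the global prefactor $(1-u)$ is accounted for only via the Euler-product identity over monic irreducibles, $\prod_{\phi\neq X}\prod_{r\geq1}\bigl(1-(u p^{-r})^{\deg\phi}\bigr) = 1-u$, which follows from $\prod_{\phi}(1-t^{\deg\phi})^{-1} = (1-pt)^{-1}$ by telescoping over $r$. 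This identity is the missing ingredient; without it the two sides cannot be matched. Carrying it out would also force you to confront the exponent mismatch you currently gloss over: the local factor produced by the factorization involves $u^{\deg(\phi)|\underline{\lambda}|}$, whereas the intrinsic formula for the measure over $\FF_{q_\phi}$ involves $u^{|\underline{\lambda}|}$, so one must say precisely which substitution of the parameter $u$ is meant in $P_{u,p^{\deg\phi}}$ for $\deg\phi>1$.
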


\begin{proof}
\cite[Thm. 10]{Ful97}
\end{proof}

We will not go into too much detail about the techniques that extract interesting consequences from this formula, but the essential point is -- possibly after some formula manipulation -- comparing the coefficients of $u^n$ on both sides. I refer to \cite{Ful97}, \cite{Ful99} and \cite{Ful00} for tons of examples.

\section{A collection of results}\label{numresults}

In this section I cite results that were obtained by the number theory community and the group theoretic community. Some of them were found by both communities, some not.

Recall that a ``randomly chosen group'' really means a randomly chosen finite abelian $p$-group with respect to the Cohen-Lenstra probability with $q = \frac{1}{p}$ regarded as a formal variable.

\subsection{Order}

\begin{theorem}
The probability that a randomly chosen group has order $p^n$ is 

$$P(\ord(G) = p^n) = q^n\prod_{i=n+1}^{\infty}(1-q^i).$$
\end{theorem}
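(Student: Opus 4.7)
The plan is to derive the formula from the zeta function machinery introduced above, using a classical $q$-series identity of Euler to expand a generating function.

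First I would rewrite the target probability as
$$
P(\ord(G)=p^n)\;=\;\frac{1}{w(\GG_p)}\sum_{\ord(G)=p^n} w(G),
$$
and bundle these sums into a single generating function in a variable $v$:
$$
F(v)\;:=\;\sum_{G\in\GG_p} v^{\,\ord_p(G)}\,w(G)\;=\;\sum_{n=0}^\infty v^n\!\!\sum_{\ord(G)=p^n} w(G).
$$
Substituting $v=p^{-s}=q^{s}$, the function $F(v)$ is precisely $\zeta_\infty^{(p)}(s):=\lim_{k\to\infty}\zeta_k^{(p)}(s)$. The explicit formula recalled in the zeta-function section gives
$$
\zeta_k^{(p)}(s)\;=\;\prod_{i=1}^{k}\frac{1}{1-p^{-s-i}},
$$
and since $w_k(G)\nearrow w(G)$ for each fixed $G$ (from equation \eqref{eq:twistedprob}, once $k\geq\rk(G)$), monotone convergence yields
$$
F(v)\;=\;\prod_{i=1}^{\infty}\frac{1}{1-vq^{i}}.
$$

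Next I would apply the classical Euler identity (a standard $q$-series expansion)
$$
\prod_{i=1}^{\infty}\frac{1}{1-vq^{i}}\;=\;\sum_{n=0}^{\infty}\frac{(vq)^{n}}{\prod_{i=1}^{n}(1-q^{i})},
$$
which is readily proved by noting that both sides satisfy the functional equation $H(v)=\tfrac{1}{1-vq}\,H(vq)$ together with $H(0)=1$. Comparing the coefficient of $v^n$ on the two sides of $F(v)=\sum_n \tfrac{q^n v^n}{(q;q)_n}$ gives
$$
\sum_{\ord(G)=p^n} w(G)\;=\;\frac{q^{n}}{\prod_{i=1}^{n}(1-q^{i})}.
$$

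Finally, dividing by $w(\GG_p)=\prod_{i=1}^{\infty}(1-q^{i})^{-1}$ (Theorem~\ref{thm:weightsum}) telescopes the product and produces the claimed identity
$$
P(\ord(G)=p^n)\;=\;q^{n}\prod_{i=n+1}^{\infty}(1-q^{i}).
$$
The only non-routine step is recognising that the generating function is Euler's product, and invoking his $q$-exponential expansion; the rest is bookkeeping with the zeta function formula already supplied in the paper.
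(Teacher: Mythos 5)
Your proof is correct, and it is essentially the route behind the paper's citation: the paper defers to \cite[Cor.\ 3.8]{CL83}, which likewise extracts the coefficient of $p^{-ns}$ from the product formula $\lim_k\zeta_k^{(p)}(s)=\prod_{i\geq 1}(1-p^{-s-i})^{-1}$ via Euler's $q$-exponential identity. Your verification of that identity through the functional equation $H(v)=\frac{1}{1-vq}H(vq)$ with $H(0)=1$ is sound, so nothing is missing.
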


\begin{proof}\cite[Cor. 3.8]{CL83}\end{proof}

\subsubsection{Higher moments of the order}

Recall that the $k$-th moment of a random variable $X$ is the expected value of $X^k$.\index{moments, higher}

The higher moments of the order of a random group do not exist if $k\geq 1$. (I.e., their values are $\infty$.) However, for the \emph{local order} (section \ref{sect:prelim}) we obtain something meaningful. In his PhD thesis \cite{Meh}, yet to appear, Bernd Mehnert gives a stunning description in terms of Eisenstein series:

For $k\geq 1$ let 
$$E_k(q) := \sum_{n=1}^{\infty}\sigma_{k-1}(n)q^n$$

be the $k$-th \emph{Eisenstein series} deprived of its constant term, where $\sigma_{i}(n) = \sum_{1 \leq d|n} d^i$ is the $i$-th divisor sum. Note that we have defined the Eisenstein series both for odd and even $k$.

For a group $G = \prod_{i=1}^l (\ZZ/p^{e_i})^{r_i}$ in standard form (in particular, all $e_i$ are mutually distinct) of order $p^k$, let

$$f_{G}(X_1,\ldots,X_k) := k!\prod_{i=1}^{l}\frac{X_{e_i}^{r_i}}{r_i! (e_i!)^{r_i}}$$

and

$$f_k(X_1,\ldots,X_k) := \sum_{G \text{ group of order $p^k$}}f_{G}(X_1,\ldots,X_k).$$

\begin{theorem}
With the above notation, the $k$-th moment $M_k$ of the local order of a random $p$-group is

$$\sum_{n\geq 0} n^k \cdot P(\ord_{p}(G)=n) = f_k(E_1,E_2,\ldots,E_k).$$
\end{theorem}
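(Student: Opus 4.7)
The plan is to compute the generating function $F(t) := \sum_{n\geq 0} P(\ord_p(G)=n)\, t^n$ in closed form, extract the $k$-th moment as $M_k = (t\partial_t)^k F(t)\big|_{t=1}$, and recognise both $M_k$ and $f_k$ as the same complete Bell polynomial $B_k$ in the cumulant-like quantities $C_j := (t\partial_t)^j \log F\big|_{t=1}$, specialised at the Eisenstein series $E_j(q)$.

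Starting from $P(\ord(G)=p^n) = q^n \prod_{i \geq n+1}(1-q^i)$ from the preceding theorem, I would factor out $\prod_{i\geq 1}(1-q^i)$ and apply Euler's identity
$$\sum_{n\geq 0}\frac{z^n}{\prod_{i=1}^n (1-q^i)} \;=\; \prod_{n\geq 0}\frac{1}{1-zq^n}$$
with $z = qt$ to obtain
$$F(t) \;=\; \prod_{i\geq 1}\frac{1-q^i}{1-tq^i},$$
so in particular $F(1) = 1$. Writing $L := t\partial_t$ and using $\log F(t) = \sum_{i\geq 1}[\log(1-q^i) - \log(1-tq^i)]$, a direct induction gives $C_j(t) = \sum_{i,m\geq 1} m^{j-1} t^m q^{im}$, and collecting terms by $n = im$ at $t=1$ yields
$$C_j\big|_{t=1} \;=\; \sum_{n\geq 1}\Bigl(\sum_{m\mid n} m^{j-1}\Bigr) q^n \;=\; E_j(q).$$

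Since $F(1) = 1$, the Fa\`a di Bruno / complete Bell polynomial identity
$$\frac{L^k F}{F} \;=\; \sum_{\rho:\,\sum_j j\rho_j = k}\frac{k!}{\prod_j \rho_j!\,(j!)^{\rho_j}}\prod_j C_j^{\rho_j}$$
(sum over integer partitions of $k$ encoded by multiplicities $\rho_j$) then gives
$$M_k \;=\; \sum_{\rho:\,\sum_j j\rho_j = k}\frac{k!}{\prod_j \rho_j!\,(j!)^{\rho_j}}\prod_j E_j(q)^{\rho_j}.$$
On the other hand, parametrising a group $G = \prod_i (\ZZ/p^{e_i})^{r_i}$ of order $p^k$ by the multiplicities $\rho_j := \#\{i : e_i = j\}$ (so that $\sum_j j\rho_j = k$) rewrites the definition of $f_k$ as
$$f_k(X_1,\ldots,X_k) \;=\; \sum_{\rho:\,\sum_j j\rho_j = k}\frac{k!}{\prod_j \rho_j!\,(j!)^{\rho_j}}\prod_j X_j^{\rho_j},$$
i.e.\ as the same complete Bell polynomial $B_k(X_1,\ldots,X_k)$. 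Substituting $X_j = E_j(q)$ closes the argument.

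All manipulations happen in the formal ring $\ZZ[[q]][t]$, so no convergence issue arises. What I expect to be the main obstacle --- or rather the main conceptual insight --- is spotting the combinatorial coincidence that the classification of abelian $p$-groups of order $p^k$ by partitions of $k$ (which indexes $f_k$ via the elementary divisor theorem) carries exactly the same multinomial weights $k!/\prod_j \rho_j!(j!)^{\rho_j}$ as the grouping of set partitions of $\{1,\ldots,k\}$ by block-size type (which indexes the Bell-polynomial expansion of $L^k F/F$). Once this bijection is observed, the emergence of the Eisenstein series is forced by the divisor identity $\sum_{m\mid n}m^{j-1} = \sigma_{j-1}(n)$ together with the product form of $F(t)$.
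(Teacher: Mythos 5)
Your argument is correct and complete; note that the paper itself gives no proof of this theorem --- it only cites Mehnert's forthcoming thesis \cite{Meh} --- so there is no textual proof to compare against, and your proposal actually supplies a self-contained argument where the source defers entirely to an unpublished reference. Each step checks out: from $P(\ord(G)=p^n)=q^n\prod_{i>n}(1-q^i)$ and Euler's identity $\sum_{n\geq 0}z^n/\prod_{i=1}^n(1-q^i)=\prod_{n\geq 0}(1-zq^n)^{-1}$ with $z=qt$ one gets $F(t)=\prod_{i\geq 1}(1-q^i)/(1-tq^i)$ and $F(1)=1$; the quantities $C_j=(t\partial_t)^j\log F|_{t=1}=\sum_{i,m\geq 1}m^{j-1}q^{im}=\sum_{n\geq 1}\sigma_{j-1}(n)q^n=E_j(q)$ are exactly the cumulants; the Fa\`a di Bruno/complete Bell identity is valid for the derivation $L=t\partial_t$ in place of $d/dt$ because its inductive proof uses only the Leibniz rule, giving $M_k=L^kF|_{t=1}=B_k(E_1,\ldots,E_k)$; and re-indexing groups $\prod_i(\ZZ/p^{e_i})^{r_i}$ of order $p^k$ by the part-multiplicities $\rho_j=\#\{i:e_i=j\}$ identifies $f_k$ with the complete Bell polynomial $B_k$. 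I checked this against the paper's listed expansions $M_2=E_1^2+E_2$, $M_3=E_1^3+3E_1E_2+E_3$, $M_4=E_1^4+6E_1^2E_2+3E_2^2+4E_1E_3+E_4$, which are precisely $B_2,B_3,B_4$. Two minor remarks: first, $F$ lives in $\ZZ[[q]][[t]]$ rather than $\ZZ[[q]][t]$, so the evaluation at $t=1$ requires the (easy) observation that the coefficient of $t^n$ in $F$ is divisible by $q^n$, whence all sums converge $q$-adically --- or one can simply differentiate the closed product form of $F$ throughout; second, your method yields the cleaner intermediate statement that the $j$-th \emph{cumulant} of the local order equals $E_j(q)$, which in particular explains the paper's remark that the variance is $E_2$.
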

\begin{proof}
\cite{Meh}.
\end{proof}

For example, $M_1 = E_1$, $M_2 = E_1^2+E_2$, $M_3 = E_1^3+3E_1E_2+E_3$, $M_4 = E_1^4+6E_1^2E_2+3E_2^2+4E_1E_3+E_4$, and so on. Remarkably, we see that the local order of a random group has expected value $E_1$ and variance $M_2 - M_1^2 = E_2$.\smallskip

Since this is the first time the result is published, let me list some computations. As formal power series, we get expected value

$$M_1 = E_1 = q+2q^2+2q^3+3q^4+2q^5+4q^6+\ldots,$$\pagebreak[2]

variance

$$V = E_2 = q+3q^2+4q^3+7q^4+6q^5+12q^6+\ldots,$$

and higher moments
\begin{eqnarray*}
M_2 & = & q+4q^2+8q^3+15q^4+20q^5+32q^6+\ldots\\
M_3 & = & q+8q^2+26q^3+63q^4+116q^5+208q^6+\ldots,\\
M_4 & = & q+16q^2+80q^3+255q^4+608q^5+1280q^6+\ldots,
\end{eqnarray*}

and so on.

Finally, I give a table giving (approximatively) expected value $M_1$, variance $V$, and higher moments $M_2$, $M_3$ and $M_4$ of the local order for various primes $p$. Recall that all values are simply obtained from the power series by plugging in $q=\frac{1}{p}$:\bigskip

\begin{tabular}{|l||c|c|c|c|c|c|c|}
\noalign{\hrule}
& $p=2$ & $p=3$ & $p=5$ & $p=7$ & $p=11$& $p=13$ & $p=17$\\
\hline
$M_1$ & 1.6067 & 0.6822 & 0.3017 & 0.1909 & 0.1091 & 0.0898 & 0.0662\\
\noalign{\hrule}
$V$ & 2.7440 & 0.9494 & 0.3660 & 0.2191 & 0.1192 & 0.0968 & 0.0701\\
\noalign{\hrule}
$M_2$ & 5.3255 & 1.4148 & 0.4571 & 0.2556 & 0.1311 & 0.1048 & 0.0745\\
\noalign{\hrule}
$M_3$ &24.4734 & 3.9984 & 0.8848 & 0.4173 & 0.1817 & 0.1387 & 0.0926\\
\noalign{\hrule}
$M_4$ & 145.5087 & 14.7677 & 2.2088 & 0.8596 & 0.3053 & 0.2189 & 0.1340\\
\noalign{\hrule}
\end{tabular}\bigskip

Recall that the local order is the $p$-logarithm of the usual order, so the trivial group has local order $0$. This is why moments of less than $1$ are possible.

\subsection{Rank}

\begin{theorem}
The probability that a randomly chosen group has rank $r$ is 

$$P(\rk(G) = r) = \left(\prod_{i=1}^{\infty}(1-q^i)\right)\frac{q^{r^2}}{\left(\prod_{i=1}^r (1-q^i)\right)^2}.$$
\end{theorem}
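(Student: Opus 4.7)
The plan is to reduce the computation of $a_r := \sum_{\rk(G) = r} w(G)$ to a weighted count of finite-index subgroups of $\ZZ_p^r$. Since $P(\rk(G) = r) = a_r/w(\GG_p)$ and $w(\GG_p) = \prod_{i=1}^{\infty}(1-q^i)^{-1}$ by theorem \ref{thm:weightsum}, the problem reduces to showing $a_r = q^{r^2}/\prod_{i=1}^r(1-q^i)^2$.

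First I would exploit the bijection between finite-index subgroups $H \le \ZZ_p^r$ and pairs $(G,[\sigma])$, with $G$ a finite abelian $p$-group of rank $\le r$ and $[\sigma]$ an $\Aut(G)$-orbit of surjections $\ZZ_p^r \twoheadrightarrow G$: the number of $H$ with $\ZZ_p^r/H \cong G$ equals $s_r(G)/\#\Aut(G)$, since $\Aut(G)$ acts freely on surjections. A one-line argument using $G/pG = \ZZ_p^r/(H + p\ZZ_p^r)$ shows $\rk(G) = r$ iff $H \subseteq p\ZZ_p^r$, and for such $G$ equation \eqref{eq:twistedprob} at $k=r$ gives $s_r(G) = |G|^r \prod_{i=1}^r(1-q^i)$.

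Weighting each subgroup by $[\ZZ_p^r : H]^{-r} = |G|^{-r}$ and summing over finite-index $H \subseteq p\ZZ_p^r$ then yields
$$\sum_{\substack{H \subseteq p\ZZ_p^r \\ [\ZZ_p^r:H] < \infty}} [\ZZ_p^r : H]^{-r} = \sum_{\rk(G) = r} \frac{s_r(G)}{\#\Aut(G)\,|G|^r} = \Big(\prod_{i=1}^r(1-q^i)\Big)\, a_r.$$
On the other hand, every such $H$ is uniquely of the form $H = pK$ for a finite-index $K \le \ZZ_p^r$, with $[\ZZ_p^r : pK] = p^r [\ZZ_p^r : K]$. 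Hence the left-hand side equals $q^{r^2}\,\zeta_{\ZZ_p^r}(r)$, where $\zeta_{\ZZ_p^r}(s) = \prod_{i=0}^{r-1}(1-p^{i-s})^{-1}$ is the classical subgroup zeta function of $\ZZ_p^r$; at $s = r$ this collapses to $\prod_{j=1}^r(1-q^j)^{-1}$, from which $a_r = q^{r^2}/\prod_{i=1}^r(1-q^i)^2$ and dividing by $w(\GG_p)$ gives the claimed formula.

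The one nontrivial ingredient is the closed form $\zeta_{\ZZ_p^r}(s) = \prod_{i=0}^{r-1}(1-p^{i-s})^{-1}$. This is standard and can be proved elementarily by parametrizing finite-index subgroups via upper-triangular Hermite normal forms and summing the resulting geometric series. Alternatively, one may avoid external input by feeding equation \eqref{eq:twistedprob} into $\zeta_k^{(p)}(0) = \prod_{i=1}^k(1-q^i)^{-1}$, obtaining the triangular system $\sum_{r=0}^k a_r \prod_{i=k-r+1}^k(1-q^i) = \prod_{i=1}^k(1-q^i)^{-1}$ and verifying the proposed closed form directly; inverting this system amounts to the same $q$-series identity in disguise, which is why the subgroup zeta approach is the cleanest.
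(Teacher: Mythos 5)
Your argument is correct, and it is a genuinely different route from the one the paper points to: the paper gives no proof of its own but cites Cohen--Lenstra's $\zeta$-function computation, Rudvalis--Shinoda's direct matrix count, and Fulman's cycle-index proof. Your reduction is self-contained and clean. Each step checks out: $\Aut(G)$ does act freely on surjections $\ZZ_p^r\twoheadrightarrow G$, so subgroups $H$ with quotient $\cong G$ are counted by $s_r(G)/\#\Aut(G)$; the criterion $\rk(G)=r\iff H\subseteq p\ZZ_p^r$ is right; equation \eqref{eq:twistedprob} at $k=r=\rk(G)$ gives exactly $s_r(G)=|G|^r\prod_{i=1}^r(1-q^i)$; and the substitution $H=pK$ rescales the index by $p^r$, producing the factor $q^{r^2}$ and leaving $\zeta_{\ZZ_p^r}(r)=\prod_{j=1}^r(1-q^j)^{-1}$. (I verified the resulting $a_r=q^{r^2}/\prod_{i=1}^r(1-q^i)^2$ against the paper's hand computations for $r=0,1,2$.) What your approach buys is that the only external input is the Hermite-normal-form evaluation of the subgroup zeta function of $\ZZ_p^r$, a completely elementary lattice count, in place of either the Rogers-type $q$-series manipulations implicit in summing $w(G)$ over rank-$r$ groups directly, or the cycle-index machinery of section \ref{sect:cycleindex}. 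What it does not buy is the extra information the cited proofs carry along for free: Fulman's cycle-index proof simultaneously yields the finite-$n$ fixed-space distribution of corollary \ref{cor:fixedspace}, and the Cohen--Lenstra $\zeta$-function proof generalizes immediately to the twisted measures $P_u$ of section \ref{sect:u-probs}. Your closing remark is also accurate: plugging \eqref{eq:twistedprob} into $\zeta_k^{(p)}(0)=\prod_{i=1}^k(1-q^i)^{-1}$ gives the triangular system $\sum_{r=0}^k a_r\prod_{i=k-r+1}^k(1-q^i)=\prod_{i=1}^k(1-q^i)^{-1}$, and checking the closed form against it is equivalent to the finite Durfee-square identity, so the subgroup-zeta detour is indeed the route that hides the $q$-series identity most effectively.
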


This formula was already contained in Cohen and Lenstra's original paper \cite[Thm. 6.3]{CL83}, but was independently proven by Rudvalis and Shinoda \cite{RS88}. Later on, a new proof by means of the cycle index was given by Fulman \cite[Thm. 15]{Ful97}.

In fact, the theorems of Rudvalis and Shinoda look very different from the version given above. They make statements about the probability that a random matrix from $\GL(n,p)$ has a fixed space of dimension $r$. But it is easy to see (cf. \cite[Lemma 11]{Ful97}) that the dimension of the fixed space of a matrix $M \in \GL(n,p)$ equals the rank of $\underline{\lambda}'_{X-1}$, i.e., the number of parts of the partition corresponding to the polynomial $X-1$ in the Jordan-Chevalley normal form. Since for $n\rightarrow \infty$ the distribution of this partition is given by the Cohen-Lenstra probability, the above theorem is equivalent to the following corollary, and this is the form in which Rudvalis/Shinoda and Fulton have given their theorems:

\begin{corollary}\label{cor:fixedspace}
The probability that a randomly chosen matrix in $\GL(n,p)$ has a fixed space of dimension $r$ approaches, as $n\rightarrow \infty$, 

$$\left(\prod_{i=1}^{\infty}(1-p^{-i})\right)\frac{p^{-r^2}}{\left(\prod_{i=1}^r (1-p^{-i})\right)^2}.$$
\end{corollary}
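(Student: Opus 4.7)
The statement is essentially a translation of the preceding theorem on $P(\rk(G)=r)$ through Theorem~\ref{thm:conjugacy}, so I would organize the proof as a two-step reduction: first a linear-algebra identification of the fixed-space dimension with a partition invariant, then invocation of the limit theorem.

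The first step is to verify Fulman's observation that, for any $M \in \GL(n,p)$, the dimension of the fixed space of $M$ equals the number of parts of the partition $\underline{\lambda}_{X-1}(M)$ attached to the polynomial $X-1$ in the Jordan-Chevalley normal form of $M$. I would argue as follows. The fixed space of $M$ is $\ker(M-I)$. Bringing $M$ to Jordan-Chevalley form, the subspace $\ker(M-I)$ is localised at the generalised eigenspace for the factor $X-1$; on each $(X-1,s)$-Jordan block of size $s$ the operator $M-I$ is the nilpotent companion matrix of $(X-1)^s$ shifted appropriately, and has a one-dimensional kernel. Summing over blocks, $\dim\ker(M-I)$ equals the total number of $(X-1,s)$-blocks, i.e.\ the number of parts of $\underline{\lambda}_{X-1}(M)$. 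Under the canonical bijection $\GG_p \cong \PPart$, the number of parts of $\underline{\lambda}_{X-1}(M)$ is precisely the rank of the finite abelian $p$-group associated to that partition.

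The second step is to apply Theorem~\ref{thm:conjugacy} with $\phi := X-1$, which is a monic polynomial of degree $1$ different from $X$. It asserts that, as $n\to\infty$, the distribution of $\underline{\lambda}_{X-1}(M)$ for $M$ chosen uniformly in $\GL(n,p)$ converges to the Cohen-Lenstra measure $P$ on $\GG_p$. Since the event ``$\dim(\text{fixed space of }M) = r$'' is by step one the same as the event ``$\rk(\underline{\lambda}_{X-1}(M)) = r$'', and since the rank takes only the values $0,1,2,\ldots$, the limit probability equals $P(\rk(G)=r)$. Substituting $q = p^{-1}$ into the formula of the preceding theorem gives exactly the expression in the corollary.

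The only non-trivial ingredients are Theorem~\ref{thm:conjugacy} (already available) and the formula for $P(\rk(G)=r)$ (the theorem immediately preceding the corollary). The main conceptual obstacle, if any, is the block-by-block verification in step one; once that linear-algebra fact is in hand, the corollary is purely a restatement. I would not expect convergence issues, since the rank takes values in a discrete set and for fixed $r$ the event depends only on the first $r+1$ columns of the conjugate partition.
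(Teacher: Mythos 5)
Your proposal is correct and follows essentially the same route as the paper: identify the fixed-space dimension $\dim\ker(M-I)$ with the number of parts of $\underline{\lambda}_{X-1}(M)$ (the paper cites this to Fulman's Lemma~11, while you supply the block-by-block verification yourself), then apply Theorem~\ref{thm:conjugacy} with $\phi = X-1$ and the preceding rank formula. No gaps worth flagging.
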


Washington, who is clearly in the number theory fraction, published this as a remarkable observation \cite{Was86}, but he did not deduce the general theorem \ref{thm:conjugacy}. Also, no immediate reason for this coincidence is known (or for the general agreement between the Cohen-Lenstra probability and the probability of partitions appearing in the Jordan-Chevalley normal form), although this might be simply due to lack of research.

\subsubsection{Higher moments of the rank}

A closed formula for the higher moments of the rank of a random group is not known. However, if we consider the quantity $p^{\rk(G)}$ instead of $\rk(G)$, then more can be said. Cohen and Martinet \cite[(1.1)(d)]{CM87} give the following formula for its higher moments:

\begin{theorem}
The $k$-th moment of $p^{\rk(G)}$ is (with $q=\frac{1}{p}$)

$$\sum_{r\geq 0}p^{kr}\cdot P(\rk(G)=r) = \sum_{i=0}^{k}\left(q^{-i(k-i)}\frac{\prod_{j=1}^{k} (1-q^j)}{\left(\prod_{j=1}^{i} (1-q^j)\right)\left(\prod_{j=1}^{k-i} (1-q^j)\right)}\right).$$
\end{theorem}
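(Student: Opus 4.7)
The plan is to interpret $p^{k\rk(G)}$ as a number of homomorphisms and to reduce the theorem to a Cohen--Lenstra moment identity.

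First, every homomorphism $G\to(\ZZ/p)^k$ factors through the Frattini quotient $G/pG\cong\FF_p^{\rk(G)}$, so $\#\mathrm{Hom}(G,(\ZZ/p)^k)=p^{k\rk(G)}$. Decomposing such homomorphisms by their image---a subspace $H\leq\FF_p^k$ of dimension $i$, of which there are $\binom{k}{i}_p$---and using that $\#\mathrm{Sur}(G,H)$ depends only on the isomorphism type of $H$, one obtains the pointwise identity
\[
p^{k\rk(G)} \;=\; \sum_{i=0}^{k}\binom{k}{i}_p\cdot\#\mathrm{Sur}\bigl(G,(\ZZ/p)^i\bigr),
\]
whose Cohen--Lenstra expectation is $E[p^{k\rk(G)}]=\sum_{i=0}^{k}\binom{k}{i}_p\,E[\#\mathrm{Sur}(G,(\ZZ/p)^i)]$.

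The heart of the proof is therefore the moment identity $E[\#\mathrm{Sur}(G,\FF_p^i)]=1$. Starting from $\#\mathrm{Sur}(G,\FF_p^i)=\prod_{j=0}^{i-1}(p^{\rk(G)}-p^j)$ (which vanishes when $\rk(G)<i$) and inserting the rank distribution from the previous theorem, the factor $\prod_{j=0}^{i-1}(1-q^{r-j})=\prod_{j=1}^{r}(1-q^j)\big/\prod_{j=1}^{r-i}(1-q^j)$ cancels one of the two identical factors in the denominator of $P(\rk(G)=r)$, and the shift $s=r-i$ reduces the expectation to
\[
\Bigl(\prod_{n=1}^{\infty}(1-q^n)\Bigr)\sum_{s\geq 0}\frac{q^{s(s+i)}}{\prod_{j=1}^{s}(1-q^j)\,\prod_{j=1}^{s+i}(1-q^j)}.
\]
By the classical Durfee-rectangle identity---the inner sum is the generating function of all partitions graded by the side length $s$ of their $s\times(s+i)$ Durfee rectangle---the inner sum equals $\prod_{n=1}^{\infty}(1-q^n)^{-1}$, so the two infinite products cancel and the expectation is $1$.

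Finally, the $p$-binomial converts to the stated form via $\binom{k}{i}_p=\binom{k}{i}_{1/q}=q^{-i(k-i)}\binom{k}{i}_q$, an immediate consequence of $1-q^{-j}=-q^{-j}(1-q^j)$; this matches the right-hand side of the theorem term by term. The main obstacle is the Durfee-rectangle identity driving the moment computation: although classical, it is the combinatorial input required to collapse the squared $q$-Pochhammer denominator in $P(\rk(G)=r)$. A purely $q$-series attack on $\sum_r q^{r^2-kr}/\bigl(\prod_{j=1}^r(1-q^j)\bigr)^2$ is also possible, but the homomorphism-counting route is cleaner and fits the Cohen--Lenstra moment-identity philosophy.
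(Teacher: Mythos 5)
Your argument is correct, and it is genuinely different from the route the paper takes: the paper gives no proof of its own but attributes the formula to Cohen--Martinet (who derive it via the $\zeta$-function machinery) and to Fulman (who reproves it with the cycle index). Your route is the ``moment'' philosophy: writing $p^{k\rk(G)}=\#\mathrm{Hom}(G,(\ZZ/p)^k)$, stratifying by the image to get $p^{k\rk(G)}=\sum_{i}\binom{k}{i}_p\#\mathrm{Sur}(G,(\ZZ/p)^i)$, and invoking $E[\#\mathrm{Sur}(G,\FF_p^i)]=1$. All the computational steps check out: the reduction of $\prod_{j=0}^{i-1}(p^r-p^j)\cdot P(\rk(G)=r)$ to $\bigl(\prod_n(1-q^n)\bigr)q^{s(s+i)}/\bigl((q;q)_s(q;q)_{s+i}\bigr)$ with $s=r-i$ is right, the Durfee-rectangle identity is the correct classical input, the interchange of sum and expectation is harmless since every term is nonnegative, and the conversion $\binom{k}{i}_p=q^{-i(k-i)}\binom{k}{i}_q$ reproduces the stated summands exactly. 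What your approach buys: it is self-contained modulo the rank-distribution theorem stated immediately before (so it is not logically independent of that result, whereas Cohen--Martinet's and Fulman's derivations are), and it gives a conceptual explanation of Fulman's observation that the summands are $q$-Stirling numbers of the second kind --- your decomposition is precisely the $q$-analogue of $n^k=\sum_i\binom{n}{i}\cdot\#\mathrm{Sur}([k],[i])$, with subspaces of $\FF_p^k$ replacing subsets. The surjection-moment identity $E[\#\mathrm{Sur}(G,A)]=1$ also holds for arbitrary finite abelian $p$-groups $A$, so your method generalizes beyond this particular statement, which neither cited proof makes visible.
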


The same formula was independently proven by Fulman \cite[Thm. 18,19]{Ful97}. He also pointed out that the summands may be interpreted as the $q$-analogue $S_q(k,i)$ of the Stirling numbers of second kind (cf. \cite{BDS94}).
%
%

\subsection{Rank and order combined}

\begin{theorem}
The probability that a finite abelian $p$-group has order $p^n$ and rank $r$ is

$$P\left(\atop{\ord(G) = n, }{\rk(G) = r}\right) = \left(\prod_{i=1}^{\infty}(1-q^i)\right)\frac{\displaystyle q^{n-r}\prod_{i=1}^{n-1} (1-q^i)}{\displaystyle |\GL(r,p)|\left(\prod_{i=1}^{r-1} (1-q^i)\right)\left(\prod_{i=1}^{n-r} (1-q^i)\right)}.$$
\end{theorem}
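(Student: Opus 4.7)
The plan is to reduce the computation to the $\zeta$-function identity $\zeta_r^{(p)}(s)=\prod_{i=1}^{r}(1-p^{-s-i})^{-1}$ via the bijection $G \leftrightarrow pG$ between rank-$r$ groups of order $p^n$ and groups of rank $\leq r$ of order $p^{n-r}$. Writing $\underline{\lambda}$ for the partition of $G$, one has $\underline{\lambda} = \underline{\mu} + (1^r)$, where $\underline{\mu} := (\lambda_1 - 1,\ldots,\lambda_r - 1)$ is a partition of $n-r$ with at most $r$ parts, corresponding to $pG$; on conjugate partitions this is just ``remove the first part'', i.e.\ $\underline{\lambda}'_1 = r$ and $\underline{\lambda}'_{i+1} = \underline{\mu}'_i$ for $i \geq 1$.

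The first step is to compute the automorphism ratio under this bijection. Rewriting theorem \ref{thm:sizeofautomorphisms} in conjugate-partition form --- using $\sum_{i,j}\min(e_i,e_j)r_i r_j = \sum_{l \geq 1}(\underline{\lambda}'_l)^2$ and the fact that the multiplicity of $i$ in $\underline{\lambda}$ equals $\underline{\lambda}'_i - \underline{\lambda}'_{i+1}$ --- yields
$$\#\Aut(G_{\underline{\lambda}}) \;=\; p^{\sum_l (\underline{\lambda}'_l)^2}\prod_{i \geq 1}\prod_{s=1}^{\underline{\lambda}'_i - \underline{\lambda}'_{i+1}}(1-q^s).$$
Applying this to both $\underline{\lambda}$ and $\underline{\mu}$, almost every factor telescopes, leaving
$$\#\Aut(G_{\underline{\lambda}}) \;=\; p^{r^2}\Bigl(\prod_{s=1}^{r-\rk(pG)}(1-q^s)\Bigr)\cdot\#\Aut(pG).$$
Summing Cohen-Lenstra weights and re-indexing by $G' := pG$ then gives
$$\sum_{\rk G = r,\,\ord_p G = n} w(G) \;=\; q^{r^2}\sum_{\rk G' \leq r,\,\ord_p G' = n-r} \frac{w(G')}{\prod_{s=1}^{r - \rk G'}(1-q^s)}.$$

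The second step is to identify the inner sum as a coefficient of $\zeta_r^{(p)}$. Equation \eqref{eq:twistedprob} with $k = r$ reads $w_r(G') = w(G')\prod_{i=r-\rk G'+1}^{r}(1-q^i)$, so the weights in the sum collapse to $w(G')/\prod_{s=1}^{r-\rk G'}(1-q^s) = w_r(G')/\prod_{i=1}^{r}(1-q^i)$. Setting $u := p^{-s}$, this recognises the series as $[u^{n-r}]\,\zeta_r^{(p)}(s) = [u^{n-r}]\prod_{i=1}^{r}(1-uq^i)^{-1}$. The classical $q$-Pochhammer identity $\prod_{i=1}^{r}(1-uq^i)^{-1} = \sum_{m\geq 0} q^m\binom{r+m-1}{m}_{\!q} u^m$ extracts this coefficient explicitly; the prefactor $q^{r^2}/\prod_{i=1}^{r}(1-q^i)$ is absorbed into $|\GL(r,p)|^{-1}$ via $|\GL(r,p)| = q^{-r^2}\prod_{i=1}^{r}(1-q^i)$, and multiplying by $1/w(\GG_p) = \prod_{i=1}^{\infty}(1-q^i)$ converts the weight sum into the claimed probability.

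The main obstacle is the automorphism-ratio computation in the first step: theorem \ref{thm:sizeofautomorphisms} is stated in terms of the elementary-divisor data $(e_i, r_i)$ rather than the conjugate partition, so transcribing it into a form where the $\underline{\lambda} \leftrightarrow \underline{\mu}$ telescoping becomes transparent requires some bookkeeping. Once that is done the rest is routine: the identification of the reweighting with $w_r$ is immediate from \eqref{eq:twistedprob}, and the coefficient extraction is a standard $q$-binomial computation.
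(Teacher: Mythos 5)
Your argument is correct, and it is a genuinely different route from the one the paper takes: the paper simply cites Fulman's proof via the Kung--Stong cycle index (theorem 16 of \cite{Ful97}), whereas you give a self-contained ``number-theoretic'' derivation using only the machinery already set up here --- the automorphism formula of theorem \ref{thm:sizeofautomorphisms}, the twisted weights $w_r$ of \eqref{eq:twistedprob}, and the product formula for $\zeta_r^{(p)}$. I checked the key steps: the conjugate-partition form of $\#\Aut$ is right (the identity $\sum_{i,j}\min(e_i,e_j)r_ir_j=\sum_l(\underline{\lambda}'_l)^2$ holds, and the formula reproduces $\#\Aut(\ZZ/p^2\times\ZZ/p)=p^3(p-1)^2$, $\#\Aut((\ZZ/p)^2)=|\GL(2,p)|$, etc.); the telescoping under $\underline{\lambda}\mapsto\underline{\mu}$ (i.e.\ $G\mapsto pG$) does give $\#\Aut(G)=p^{r^2}\bigl(\prod_{s=1}^{r-\rk(pG)}(1-q^s)\bigr)\#\Aut(pG)$; the denominators $\prod_{s=1}^{r-\rk G'}(1-q^s)\cdot\prod_{i=r-\rk G'+1}^{r}(1-q^i)$ collapse to $\prod_{i=1}^{r}(1-q^i)$ independently of $\rk G'$, which is exactly what makes the inner sum the coefficient of $u^{n-r}$ in $\prod_{i=1}^r(1-uq^i)^{-1}$; and the $q$-binomial extraction together with $|\GL(r,p)|=q^{-r^2}\prod_{i=1}^r(1-q^i)$ yields the stated formula (verified against the small cases $(n,r)=(1,1),(2,1),(2,2),(3,2)$). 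What your approach buys is precisely the proof the paper says is ``missing in the number theory community'': it stays entirely inside the Cohen--Lenstra/$\zeta$-function formalism and makes transparent why $|\GL(r,p)|$ and the $q$-binomial coefficient $\binom{n-1}{r-1}_q$-type structure appear, whereas Fulman's cycle-index proof gets the result as one instance of a general coefficient-comparison machine and generalizes more readily to other conjugacy-class statistics. The only cosmetic caveat is that your bijection presumes $r\geq 1$ and $n\geq r$; the degenerate cases $r=0$ or $n<r$ should be disposed of separately (the former reduces to $P(0)$, the latter to the empty event), but that is a one-line remark, not a gap.
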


This theorem seems to be missing in the number theory community. It was proven by Fulman \cite[Thm. 16]{Ful97} using the cycle index.

\subsection{Exponent}\label{sect:expon}

\begin{theorem}
The probability that a random group has ($p$-adic) exponent at most $e$ is

$$P(\exp{G} \leq e) = \prod_{\atop{i=1}{i \equiv 0,\pm (e+1) \bmod (2e+3)} }^{\infty} (1-q^i),$$
\end{theorem}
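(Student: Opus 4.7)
The plan is to rewrite the sum defining $P(\exp_p G \le e)$ in terms of conjugate partitions and then apply the Andrews--Gordon identity. Since the group corresponding to the partition $\underline{\lambda}$ has exponent $p^{\lambda_1}$, the condition $\exp_p G \le e$ is equivalent to $\lambda_1 \le e$, i.e.\ the conjugate partition $\underline{\mu} = \underline{\lambda}'$ has at most $e$ parts. So the first step is the identity
$$P(\exp_p G \le e) \;=\; \left(\prod_{i=1}^{\infty}(1-q^i)\right)\sum_{\underline{\lambda}:\,\lambda_1 \le e}\frac{1}{\#\Aut(G_{\underline{\lambda}})}.$$

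Next I would rewrite the Cohen--Lenstra weight of $G_{\underline{\lambda}}$ via the conjugate partition $\underline{\mu}$. Writing $G = \prod_i(\ZZ/p^{e_i})^{r_i}$, Theorem~\ref{thm:sizeofautomorphisms} gives
$$\#\Aut(G) \;=\; p^{\sum_{i,j}\min(e_i,e_j)r_ir_j}\prod_{i}\phi_{r_i}(q), \qquad \phi_r(q):=\prod_{s=1}^r(1-q^s).$$
The key combinatorial observation is the double-counting identity $\sum_{i,j}\min(e_i,e_j)r_ir_j = \sum_{k\ge 1}\mu_k^2$ (counting, for each $k\ge 1$, the pairs of indices $(l,m)$ with $\lambda_l,\lambda_m\ge k$). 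The exponents of the $\phi$ factors become $r_i = m_{e_i}(\underline{\lambda}) = \mu_{e_i}-\mu_{e_i+1}$; letting $\mu_{e+1}:=0$, the sum becomes
$$\sum_{\underline{\lambda}:\,\lambda_1\le e}\frac{1}{\#\Aut(G_{\underline{\lambda}})} \;=\; \sum_{\mu_1\ge \mu_2\ge\cdots\ge \mu_e\ge 0}\frac{q^{\mu_1^2+\cdots+\mu_e^2}}{\prod_{i=1}^{e}\phi_{\mu_i-\mu_{i+1}}(q)}.$$

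Now I would invoke the Andrews--Gordon identity with parameter $k=e+1$ (the case where the extra linear factor $N_i+\cdots+N_{k-1}$ is absent, i.e.\ $i=k$): setting $N_j := \mu_j$ and $n_j := \mu_j-\mu_{j+1}$, it reads
$$\sum_{n_1,\ldots,n_e\ge 0}\frac{q^{N_1^2+\cdots+N_e^2}}{(q;q)_{n_1}\cdots(q;q)_{n_e}} \;=\; \prod_{\substack{n\ge 1 \\ n\not\equiv 0,\pm(e+1)\bmod(2e+3)}}\frac{1}{1-q^n}.$$
Multiplying by $w(\GG_p)^{-1}=\prod_{n\ge 1}(1-q^n)$ cancels exactly the factors with $n\not\equiv 0,\pm(e+1)\bmod(2e+3)$ and leaves the claimed product.

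The routine part is the bookkeeping converting the $\Aut$-formula into the $\underline{\mu}$-form (the $\sum\mu_j^2$ identity and the reindexing of $\phi$ factors); the deep input is the Andrews--Gordon identity, and invoking it correctly — especially matching the modulus $2e+3$ and the ``missing'' residues $0,\pm(e+1)$ — is the step that requires care. A sanity check comes from specializing $e=1$ (the case of elementary abelian $p$-groups), which should reduce to a standard Rogers--Ramanujan-type product modulo~$5$.
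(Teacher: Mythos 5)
Your proposal is correct and follows essentially the route the paper attributes to Cohen's original proof: a direct summation of the Cohen--Lenstra weights over partitions with largest part at most $e$, rewritten via the conjugate partition (using $\sum_{i,j}\min(e_i,e_j)r_ir_j=\sum_k\mu_k^2$), followed by the analytic Andrews--Gordon identity with $k=i=e+1$ and cancellation against $w(\GG_p)^{-1}$. The paper itself only cites the literature and notes that all known proofs hinge on the generalized Rogers--Ramanujan identities, which is exactly the deep input you invoke, and your bookkeeping and the $e=1$ sanity check are accurate.
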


where the index runs through all positive integers that satisfy one of the congruences.

This theorem was first proven by Cohen \cite{Coh85} and was independently rediscovered by Fulman \cite[Thm. \!\!21]{Ful97} via his Young Tableau Algorithm. A different and very simple proof is given in \cite{Len08} by means of so-called CL-maps.

All proof methods involve the generalized Ramanujan-Rogers identities \cite[Thm. 7.5]{And76}. The case $e=1$ occurred already in \cite{CL83} and involves the original Ramanujan-Rogers identity.

\subsection{$u$-probabilities}\label{sect:u-probs}

\begin{definition}\label{def:u-prob}
Let $u$ be a positive integer and $G$ a finite abelian $p$-group. The $u$-\emph{probability of $G$}, denoted by $P_u(G)$, is the probability that $G$ is obtained by the following random process: \index{u-probability@$u$-probability}\nomenclature[Pu]{$P_u$}{$u$-Cohen-Lenstra probability, cf. \ref{def:u-prob}}

\begin{enumerate}
\item Choose randomly a $p$-group $H$ with respect to the Cohen-Lenstra probability.
\item Choose $u$ elements $g_1,\ldots,g_u$ uniformly at random.
\item Output $H/\langle g_1,\ldots, g_u\rangle$.
\end{enumerate}

Here, $\langle g_1,\ldots, g_u\rangle$ denotes the subgroup generated by $g_1,\ldots ,g_u$.
\end{definition}

The $u$-probabilities are important for studying class groups of number fields (cf.\ \cite{CL83}, \cite{Mal06} or \cite{Len09} for details). They have extensively been studied by Cohen and Lenstra \cite{CL83} and others. By means of $\zeta$-functions, Cohen and Lenstra derived the following explicit formula:

\begin{theorem}\label{thm:uprobs}
Let $u>0$ be an integer, and let $G$ be a finite abelian $p$-group of order $n$. Then 

\begin{eqnarray*}
P_u(G) & = & \frac{1}{n^u\prod_{i=1}^{u}(1-p^{-i})}P(G)\\
& = & n^{-u}\frac{1}{\#\Aut(G)}\prod_{i=u+1}^{\infty}(1-p^{-i}).
\end{eqnarray*}
\end{theorem}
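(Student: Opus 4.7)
The plan is to prove the second of the two displayed equalities; the first then follows mechanically. Indeed, by the definitions of $w$ and $P$ together with Theorem \ref{thm:weightsum}, $P(G) = w(G)/w(\GG_p) = \frac{1}{\#\Aut(G)}\prod_{i=1}^{\infty}(1-p^{-i})$, and this product factors as $\bigl(\prod_{i=1}^{u}(1-p^{-i})\bigr)\bigl(\prod_{i=u+1}^{\infty}(1-p^{-i})\bigr)$, so the two right-hand sides in the theorem agree once the second one is established.

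For the second equality I would invoke the $\zeta$-function machinery developed in the preceding sections. Applied to the indicator function $\mathbf{1}_G$ of the singleton $\{G\}\subset\GG_p$, the twisted expected-value formula yields
$$P_u(G) \;=\; E_u(\mathbf{1}_G) \;=\; \lim_{k\to\infty}\frac{\zeta_k^{(p)}(\mathbf{1}_G;u)}{\zeta_k^{(p)}(u)} \;=\; \lim_{k\to\infty}\frac{w_k(G)/|G|^u}{\prod_{i=1}^{k}(1-p^{-u-i})^{-1}}.$$
I would then evaluate the numerator and denominator separately. By formula \eqref{eq:twistedprob}, for $k\geq r:=\rk(G)$ we have $w_k(G) = w(G)\prod_{i=k-r+1}^{k}(1-p^{-i})$; each of these $r$ factors tends to $1$ as $k\to\infty$, so $w_k(G) \to w(G) = 1/\#\Aut(G)$. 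The reciprocal of the denominator is $\prod_{i=1}^{k}(1-p^{-u-i}) = \prod_{j=u+1}^{u+k}(1-p^{-j})$, which tends to $\prod_{j=u+1}^{\infty}(1-p^{-j})$ as $k\to\infty$. Combining these two limits with the factor $|G|^{-u} = n^{-u}$ yields the claimed formula.

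The main obstacle in this plan is not the limit calculation, which is routine, but the legitimacy of identifying $E_u(\mathbf{1}_G)$ with the $u$-probability defined in Definition \ref{def:u-prob} via random generators and quotients. That identification --- namely, the statement that the twisted measure associated with $w_u(G) = s_u(G)|G|^{-u}w(G)$ really does describe the distribution of $H/\langle g_1,\ldots,g_u\rangle$ for $H$ drawn from $P$ and the $g_i$ drawn uniformly from $H$ --- is the genuine combinatorial content, and goes back to Cohen and Lenstra \cite[Prop.\ 3.1 and Cor.\ 5.5]{CL83}. A self-contained alternative would be a direct counting argument: expand $P_u(G) = \sum_H P(H)|H|^{-u}\#\{(g_i)\in H^u : H/\langle g_i\rangle \cong G\}$, rewrite the inner count as $(\#\Aut(G))^{-1}\sum_{\pi\colon H\twoheadrightarrow G}s_u(\ker\pi)$ (by classifying pairs consisting of a generating tuple of the kernel together with an isomorphism $H/\ker\pi\cong G$), and reparameterize the resulting double sum by the isomorphism type of the kernel; this route bypasses $\zeta$-functions but requires standard, somewhat delicate weighting identities for extensions of finite abelian $p$-groups.
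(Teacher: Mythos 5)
Your argument is correct, and it is essentially the argument behind the result: the paper itself offers no proof beyond the citation to \cite[Example 5.9]{CL83}, and what you have written out is precisely the $\zeta$-function computation that that example performs. Your limit evaluation checks out ($\zeta_k^{(p)}(\mathbf{1}_G;u)=w_k(G)/n^u$ by definition, $w_k(G)\to w(G)=1/\#\Aut(G)$ by \eqref{eq:twistedprob}, and $\prod_{i=1}^{k}(1-p^{-u-i})\to\prod_{j=u+1}^{\infty}(1-p^{-j})$), and the reconciliation of the two right-hand sides via Theorem \ref{thm:weightsum} is fine. You are also right to single out the identification of $E_u(\mathbf{1}_G)$ with the quotient-process probability of Definition \ref{def:u-prob} as the only nontrivial input: the paper asserts that identification in its $\zeta$-function section, again only by citation to \cite[Cor.\ 5.5]{CL83}, so within the logic of this paper you may simply invoke it, while a self-contained treatment would need the counting argument you sketch (or Cohen--Lenstra's Prop.\ 3.1). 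In short: correct, and the same route as the cited source; the only caveat is that the burden of proof has been shifted onto a statement the paper itself does not prove.
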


\begin{proof}
\cite[Example 5.9]{CL83}
\end{proof}

In the same paper, you can find explicit formulas for the $u$-probability that a $p$-group is of a certain order or certain rank, is cyclic, is elementary, and formulas for the expected values of the size of a group and the number of elements with given annihilator \cite[examples 5.8--5.13, theorem 6.3]{CL83}. A formula for the $u$-probability of the exponent of a $p$-group is given in \cite{Coh85}.



%
%
%
%

\bibliographystyle{plain}
\bibliography{Cohen-Lenstra-Heuristic}

\end{document}